\newcommand{\so}{\mbox{${\mathfrak s \mathfrak o}$}}
\newcommand{\m}{\mbox{${\mathfrak m}$}}
\newcommand{\C}{\mbox{${\mathbb C}$}}
\newcommand{\I}{\mbox{${\mathbb I}$}}
\newcommand{\R}{\mbox{${\mathbb R}$}}
\newcommand{\D}{\mbox{${\mathbb S}$}}
\newcommand{\tr}{{\rm tr}}
\newcommand{\End}{{\rm End}}
\newcommand{\Ric}{{\rm Ric}}
\newcommand{\SO}{{\rm SO}}
\newcommand{\SU}{{\rm SU}}
\newcommand{\Sp}{{\rm Sp}}
\newcommand{\Spin}{{\rm Spin}}
\newcommand{\U}{{\rm U}}
\newcommand{\scal}{{\rm scal}}
\newcommand{\T}{{\rm T}}
\newcommand{\Sym}{{\rm Sym}}
\newcommand{\id}{\rm id}
\def\bea{\begin{eqnarray*}}
\def\eea{\end{eqnarray*}}
\newcommand{\eop}{\mbox{$\Box$}}
\def\numberwithin#1#2{\@ifundefined{c@#1}{\@nocnterrr}{%
  \@ifundefined{c@#2}{\@nocnterr}{%
  \@addtoreset{#1}{#2}%
  \toks@\expandafter\expandafter\expandafter{\csname the#1\endcsname}%
  \expandafter\xdef\csname the#1\endcsname
    {\expandafter\noexpand\csname the#2\endcsname
     .\the\toks@}}}}
\numberwithin{equation}{section}
\newtheorem{thm}[equation]{Theorem}
\newtheorem{lemma}[equation]{Lemma}
\newtheorem{prop}[equation]{Proposition}
\newtheorem{cor}[equation]{Corollary}
\newtheorem{ex}[equation]{Example}
\newtheorem{rem}[equation]{Remark}
\newenvironment{rmk}{\begin{rem} \em}{\end{rem}}
\def\Hom{\mathrm{Hom}}
\def\sp{\mathfrak{sp}}
\def\Cas{\mathrm{Cas}}
\begin{document}

\title[Linear Instability of Sasaki Einstein and nearly parallel ${\rm G}_2$ manifolds]{Linear Instability of Sasaki Einstein and \\ nearly parallel ${\rm G}_2$ manifolds}
\author{Uwe Semmelmann}
\address{Institut f\"ur Geometrie und Topologie \\
Fachbereich Mathematik\\
Universit{\"a}t Stuttgart\\
Pfaffenwaldring 57 \\
70569 Stuttgart, Germany}
\email{uwe.semmelmann@mathematik.uni-stuttgart.de}

\author{Changliang Wang}
\address{School of Mathematical Sciences and Institute for Advanced Study, Tongji University, Shanghai 200092, China}
\email{wangchl@tongji.edu.cn}

\author{M. Y.-K. Wang}
\address{Department of Mathematics and Statistics, McMaster
University, Hamilton, Ontario, L8S 4K1, CANADA}
\email{wang@mcmaster.ca}

\date{revised \today}

\begin{abstract}
{In this article we study the stability problem for the Einstein metrics on Sasaki Einstein and on complete nearly parallel ${\rm G}_2$ manifolds. In the Sasaki case we show linear instability if the second Betti number is positive. Similarly we prove that nearly parallel $\rm G_2$ manifolds with positive third Betti number are
linearly unstable. Moreover, we prove linear instability for the Berger space $\SO(5)/\SO(3)_{irr} $ which is a $7$-dimensional homology sphere with a proper nearly parallel ${\rm G}_2$ structure.}
\end{abstract}

\maketitle

\noindent{{\it Mathematics Subject Classification} (2000): 53C25, 53C27, 53C44}

\medskip
\noindent{{\it Keywords:} linear stability, real Killing spinors, nearly parallel ${\rm G}_2$ manifolds, Sasaki Einstein manifolds}

\medskip
\setcounter{section}{0}


\section{\bf Introduction}

In this article we continue the investigation of the linear instability of Einstein manifolds admitting a non-trivial
real Killing spinor. Recall that the linear stability of complete Einstein manifolds admitting a non-trivial parallel
or imaginary Killing spinor has been established in \cite{DWW05}, \cite{Kr17}, and \cite{Wan17}. It is therefore
of some interest to consider the stability problem for complete Einstein manifolds which admit a real Killing spinor,
especially because these manifolds admit more geometric structure than generic Einstein manifolds with positive scalar curvature, and in view of their role in supersymmetric grand unification theories in physics over the years.

We refer the reader to \cite{WW18} for a summary of the various notions of stability under consideration and for a
description of different cases of the general problem. Here we only note that in this paper linear instability refers
to the second variation at Einstein metrics for the Einstein-Hilbert action.  Explicitly, this means that there
exists a non-trivial symmetric $2$-tensor $h$ that is {\em transverse traceless}  (``TT" in short),
i.e., $\tr_g h = 0, \delta_g h = 0$,  such that
\begin{equation}  \label{instability}
\langle \nabla^* \nabla h - 2 \mathring{R} h, h \rangle_{L^2(M, g)} = \langle (\Delta_L - 2E)h, h \rangle_{L^2(M, g)} < 0,
\end{equation}
where $E$ is the Einstein constant, $\Delta_L$ is the positive Lichnerowicz Laplacian, $\mathring{R}h$
is the action of the curvature tensor on symmetric $2$-tensors, and $\nabla^* \nabla$ is the (positive) rough Laplacian.
Condition (\ref{instability}) implies linear instability with respect to Perelman's $\nu$-entropy as well as
dynamical instability with respect to the Ricci flow (by a theorem of Kr\"oncke \cite{Kr15}).

Recall that complete spin manifolds with constant positive sectional curvature are stable Einstein manifolds. They
are exceptional in the sense that they also admit a maximal family of real Killing spinors. For the sake of a smoother
exposition we will henceforth exclude these manifolds from discussion. It then follows that the only even dimension
for which there are complete metrics admitting a non-trivial real Killing spinor is six. In this
situation the Einstein manifolds are strict nearly K\"ahler or else isometric to round $S^6$. (Recall that a strict, nearly
K\"ahler manifold is an almost Hermitian manifold for which the almost complex structure $J$ is non-parallel and
satisfies $(\nabla_X J)X = 0$, where $X$ is any tangent vector and $\nabla$ is the Levi-Civita connection.)  The round sphere
is clearly stable, but it is distinguished by having a maximal family of real Killing spinors.  For the first case
we showed in \cite{SWW20} that if either the second or third Betti number of the manifold is nonzero, then the nearly
K\"ahler metric is linearly unstable. A topological consequence of this fact is that a complete, strict, nearly
K\"ahler $6$-manifold that is linearly stable must be a rational homology sphere.

In this paper, we first consider the Sasaki Einstein case, which arises in all odd dimensions. To simplify matters,
we will take a  Sasaki Einstein manifold to be an odd-dimensional Einstein Riemannian manifold $(M^{2n+1}, g)$
together with
\begin{enumerate}
\item[(a)] a contact $1$-form $\eta$ whose dual vector field $\xi$ is a unit-length Killing field, (i.e., $\eta \wedge (d\eta)^n \neq 0 $ 
everywhere on $M$, $\eta(\xi)=1$, and $L_{\xi} g = 0$), ($K$-contact condition)
\item[(b)] an endomorphism $\Phi: TM \rightarrow TM$ satisfying, for all tangent vectors $X$ and $Y$, the equations
$$ \Phi^2 = - \I + \eta \otimes \xi, \,\,\,\,\, g(\Phi(X), \Phi(Y)) = g(X, Y) - \eta(X) \eta(Y), \,\, \mbox{\rm and}$$
\item[(c)] $d\eta(X, Y) = 2 g(X, \Phi(Y)).$
\end{enumerate}
Systematic expositions of Sasaki Einstein manifolds can be found in \cite{BFGK91} and especially in \cite{BG08}.
An equivalent characterization of a Sasaki Einstein manifold is an Einstein manifold whose metric cone has holonomy
lying in $\SU(n+1)$ \cite{Ba93}. Our first result is:

\begin{thm}  \label{SE}
A complete Sasaki Einstein manifold of dimension $>3$ with non-zero second Betti number $b_2$ must
be linearly unstable with respect to the Einstein-Hilbert action and hence dynamically unstable for the
Ricci flow. More precisely, the coindex of such an Einstein metric, i.e., the dimension of the maximal
subspace of the space of transverse traceless symmetric $2$-tensors on which $($\ref{instability}$)$ holds, is $\geq b_2$.
\end{thm}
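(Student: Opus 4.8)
The plan is to manufacture, out of the second cohomology, an explicit $b_2$-dimensional space of transverse traceless tensors on which the stability operator $\Delta_L - 2E$ is strictly negative. First I would record that a Sasaki Einstein manifold has positive Einstein constant (with the standard normalization $\Ric = 2n\, g$ on $M^{2n+1}$, so $E = 2n$), hence $M$ is compact by Bonnet--Myers and Hodge theory applies: the space $\mathcal H^2$ of harmonic $2$-forms has dimension $b_2$. The restriction to dimension $>3$, i.e. $n\ge 2$, guarantees that the transverse geometry is genuinely Kähler of positive dimension; for $n=1$ the transverse base is a Riemann surface, on which every basic $(1,1)$-form is a multiple of the Kähler form and the construction below degenerates. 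The key structural input, from the Hodge theory of compact Sasakian manifolds, is that every harmonic $2$-form $\omega$ is \emph{basic} and \emph{primitive}, $\iota_\xi\omega = 0$ and $\Lambda\omega = 0$, and is of type $(1,1)$ with respect to the transverse complex structure $J = \Phi|_{\ker\eta}$. Heuristically this is forced because the transverse Kähler form $\tfrac12 d\eta$ is exact on $M$, so no multiple of its class survives in $H^2(M)$.

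Given such an $\omega$ I would set $h_\omega(X,Y) := \omega(\Phi X, Y)$ on $\ker\eta$ and $h_\omega(\xi,\cdot):=0$. The type-$(1,1)$ condition makes $h_\omega$ symmetric, while primitivity gives $\tr_g h_\omega = 0$; since $\Phi$ is invertible on $\ker\eta$ the assignment $\omega\mapsto h_\omega$ is linear and injective, producing a $b_2$-dimensional subspace $\mathcal V\subset \Sym^2_0(T^*M)$. The next verification is that $\mathcal V$ consists of TT-tensors, i.e. $\delta_g h_\omega = 0$. This I would obtain by differentiating the defining formula and substituting the Sasaki structure equations (in the present conventions) $\nabla_X\xi = -\Phi X$ and $(\nabla_X\Phi)Y = g(X,Y)\xi - \eta(Y)X$: the term built from $\delta_g\omega$ vanishes because $\omega$ is coclosed, and the residual zeroth-order terms vanish upon using $\iota_\xi\omega = 0$ together with $\Lambda\omega = 0$.

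The heart of the argument, and the step I expect to be the main obstacle, is to show that $\Delta_L$ preserves $\mathcal V$ and acts there as a single scalar $\lambda_0$ with $\lambda_0 < 2E$. Writing $\Delta_L h_\omega = \nabla^* \nabla h_\omega + 2E\, h_\omega - 2\mathring{R}h_\omega$, I would compute $\nabla^* \nabla h_\omega$ by commuting the rough Laplacian past $\Phi$: the harmonicity of $\omega$ enters through the Weitzenböck formula, which expresses $\nabla^*\nabla\omega$ in terms of the curvature acting on $\omega$, while each failure of $\Phi$ to be parallel contributes further curvature and structure terms. The delicate part is to reorganize the curvature action $\mathring{R}h_\omega$ together with these correction terms, using the Sasaki curvature identities, so that everything collapses to a multiple of $h_\omega$; this is where sign conventions and the interplay of the horizontal curvature with $\nabla\Phi$ must be handled with care. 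Granting this, one has $\langle(\Delta_L - 2E)h_\omega, h_{\omega'}\rangle = (\lambda_0 - 2E)\langle h_\omega, h_{\omega'}\rangle$, so the strict inequality $\lambda_0 < 2E$ makes the stability quadratic form negative definite on $\mathcal V$. This yields linear instability and the asserted bound $\mathrm{coindex}\ge b_2$; dynamical instability under the Ricci flow then follows from Kr\"oncke's theorem \cite{Kr15}.
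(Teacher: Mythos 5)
Your setup coincides with the paper's: the same destabilizing candidates $h_\omega(X,Y)=\omega(\Phi X,Y)$ built from harmonic $2$-forms (which are basic, primitive, and of type $(1,1)$), the same TT verification, and the same reduction of instability to an eigenvalue estimate for $\Delta_L$. But the proof has a genuine gap exactly where you flag ``the heart of the argument'': you never actually show that $\Delta_L h_\omega=\lambda_0 h_\omega$, never identify $\lambda_0$, and never verify $\lambda_0<2E=4n$. The phrase ``Granting this'' concedes the entire theorem, since without the numerical value of $\lambda_0$ there is no negativity statement at all. Your proposed route --- commuting $\nabla^*\nabla$ past $\Phi$ with the Levi-Civita connection --- is precisely what is hard: $\nabla\Phi\neq 0$, so every derivative of $\Phi$ spawns terms involving $\xi$, $\eta$ and curvature, and nothing in your outline explains why these reorganize into a scalar multiple of $h_\omega$ rather than some other $\Phi$-invariant tensor. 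In particular, your heuristic for the dimension restriction is not the real one: the construction does not ``degenerate'' at $n=1$; rather, the eigenvalue turns out to be exactly $4$, which equals $2E=4n$ when $n=1$, so strict instability fails only marginally there (and in any case $b_2=0$ in dimension $3$ by Bonnet--Myers and Poincar\'e duality).

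The paper closes this gap by a structural trick that avoids the messy direct computation. It works with the canonical Sasakian connection $\bar\nabla=\nabla+A$ with totally skew torsion $\eta\wedge d\eta$, for which $\bar\nabla\Phi=0$, $\bar\nabla\eta=0$, $\bar\nabla d\eta=0$. Consequently the map $\omega\mapsto h_\omega$ is a $\bar\nabla$-parallel bundle map, and the modified Laplacian $\bar\Delta=\bar\nabla^*\bar\nabla+q(\bar R)$ commutes with parallel bundle maps. Two comparison lemmas, proved by direct algebra on $\Phi$-invariant basic tensors, give $q(\bar R)-q(R)=2\,\id$ on $2$-forms and $-2\,\id$ on trace-free symmetric $2$-tensors, and $\bar\nabla^*\bar\nabla-\nabla^*\nabla=-2\,\id$ on TT, $\Phi$-invariant, basic symmetric $2$-tensors; hence $\bar\Delta-\Delta_L$ is $0$ on the relevant $2$-forms and $-4\,\id$ on the relevant symmetric tensors. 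Since $\Delta\omega=0$ gives $\bar\Delta\omega=0$, parallelism gives $\bar\Delta h_\omega=0$, and therefore $\Delta_L h_\omega=4h_\omega$, so $(\Delta_L-2E)h_\omega=(4-4n)h_\omega<0$ precisely for $n>1$. If you want to salvage your Levi-Civita approach you must either reproduce these cancellations by hand or import an equivalent comparison formula; as written, the proposal reduces the theorem to its hardest step without solving it.
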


This result may be viewed as an odd-dimensional analogue of the observation by Cao-Hamilton-Ilmanen \cite{CHI04} that
a Fano K\"ahler Einstein manifold with second Betti number $> 1$ is linearly unstable. Note that our result applies
to irregular Sasakian Einstein manifolds as well. For dimension $5$, it follows immediately that if a
complete Sasaki Einstein $5$-manifold is linearly stable then it must be a rational homology sphere.
One can also apply the above result to any of the $2$-sphere's worth of compatible Sasaki Einstein structures
on a $3$-Sasakian manifold with nonzero second Betti number to deduce their linear instability. Recall that
 $3$-Sasakian manifolds are automatically Einstein.

In dimension seven, a complete simply connected Riemannian manifold admitting a non-trivial real Killing spinor
is called a {\it nearly parallel $\mathrm G_2$ manifold}.  As we have excluded round spheres,  such a manifold falls
into one of three classes depending on whether the dimension of the space
of Killing spinors is $3, 2$ or $1$. These are respectively the $3$-Sasakian, Sasaki Einstein but not $3$-Sasaki,
and the proper nearly parallel $\rm{G}_2$ cases.  For this family we obtained the following

\begin{thm} \label{b3}
Let $(M^7, g)$ be a complete nearly parallel ${\rm G}_2$-manifold.  Then the coindex of the Einstein metric $g$ is at least
$b_3$. If the manifold is in addition Sasaki Einstein then the coindex is at least $b_2 + b_3$. Hence in the latter
case if such an manifold is linearly stable, then it must be a rational homology sphere.
\end{thm}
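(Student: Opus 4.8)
The plan is to manufacture, directly from the topology, a family of transverse traceless (``TT'') tensors of dimension $b_3$ (resp.\ $b_2+b_3$) on which the quadratic form in (\ref{instability}) is strictly negative. Since a nearly parallel $\mathrm{G}_2$ manifold carries a real Killing spinor $\psi$, it is Einstein with positive scalar curvature, hence compact by Myers' theorem, so Hodge theory applies and $b_3=\dim\mathcal{H}^3(M)$. Normalizing the Killing constant so that $\mathrm{Ric}=6g$, the Einstein constant is $E=6$ and the destabilizing condition amounts to producing TT tensors $h$ with Lichnerowicz eigenvalue below $2E=12$. The spinor $\psi$ determines the defining $3$-form $\phi$ with $d\phi=\lambda\ast\phi$ and $d\ast\phi=0$; thus $\phi$ is coclosed but, as $\lambda\neq 0$, not closed, so $\phi$ itself is an eigenform that does \emph{not} contribute to cohomology.

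First I would associate a trace-free symmetric $2$-tensor to each harmonic $3$-form. Using the pointwise $\mathrm{G}_2$-decomposition $\Lambda^3T^*M=\Lambda^3_1\oplus\Lambda^3_7\oplus\Lambda^3_{27}$ and the $\mathrm{G}_2$-equivariant isomorphism $\Lambda^3_{27}\cong\Sym^2_0T^*M$, I set $h_\omega$ equal to the image of the $\Lambda^3_{27}$-component of $\omega$. The next step is to show that $\omega\mapsto h_\omega$ is injective on $\mathcal{H}^3$: the type-$1$ line is spanned by $\phi$, which is not harmonic, while the type-$7$ summand is modelled on $T^*M$, whose harmonic representatives vanish because $M$ is simply connected ($b_1=0$); one then checks, using the nearly parallel structure, that a harmonic $3$-form has no type-$1$ or type-$7$ part and is therefore pure type $27$, so $\omega\mapsto h_\omega$ embeds $\mathcal{H}^3$. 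Trace-freeness of $h_\omega$ is automatic, and $\delta h_\omega=0$ would follow from $d\omega=\delta\omega=0$ once one knows that the isomorphism $\Lambda^3_{27}\cong\Sym^2_0T^*M$ intertwines $d\oplus\delta$ on type-$27$ forms with the corresponding first-order operators on $\Sym^2_0T^*M$ up to zeroth-order structure terms that annihilate harmonic inputs.

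The heart of the matter is a Weitzenb\"ock identity on nearly parallel $\mathrm{G}_2$ manifolds relating the two Laplacians under this isomorphism,
$$ \Delta_L\,h_\omega \;=\; h_{\Delta\omega} \;+\; c\,h_\omega , $$
where $\Delta$ is the Hodge Laplacian and $c$ is a constant fixed by $\lambda$ and the scalar curvature (equivalently, by $\psi$ through the explicit curvature term $\mathring{R}$ and the Clifford relations). For harmonic $\omega$ the first term vanishes, giving $\Delta_L h_\omega=c\,h_\omega$, and the entire argument reduces to the strict inequality $c<2E=12$. I expect the computation of $c$ to be the main obstacle, precisely as the analogous eigenvalue estimate was in the nearly K\"ahler case \cite{SWW20}; granting it, every nonzero harmonic $3$-form yields $\langle(\Delta_L-2E)h_\omega,h_\omega\rangle_{L^2}<0$, so the coindex is at least $b_3$.

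Finally, for the Sasaki Einstein refinement, a $7$-dimensional Sasaki Einstein manifold carries a real Killing spinor and hence a nearly parallel $\mathrm{G}_2$ structure, so the bound coindex $\geq b_3$ holds; simultaneously Theorem \ref{SE}, applicable since $7>3$, produces from harmonic $2$-forms and the endomorphism $\Phi$ a $b_2$-dimensional space of destabilizing TT tensors. To upgrade this to coindex $\geq b_2+b_3$ I would prove that the combined map $\mathcal{H}^2\oplus\mathcal{H}^3\to\Sym^2_0T^*M$ is injective: the two families are obtained by contracting against structure forms of different degree and are separated by their $\mathrm{G}_2$-type (equivalently by their behaviour under the Reeb field $\xi$), so a nontrivial relation would pair a harmonic $2$-form with a harmonic $3$-form, which type considerations forbid. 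If such a manifold were linearly stable then $b_2+b_3=0$; as it is simply connected ($b_1=b_6=0$) and $7$-dimensional, Poincar\'e duality gives $b_4=b_3=0$ and $b_5=b_2=0$, so every Betti number except $b_0=b_7=1$ vanishes and $M$ is a rational homology sphere.
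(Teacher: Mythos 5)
Your outline follows the same route as the paper: use Bryant's ${\rm G}_2$-equivariant identification $j:\Lambda^3_{27}\rightarrow {\rm S}^2_0\,\T M$ to convert harmonic $3$-forms into TT tensors, show these are $\Delta_L$-eigentensors with eigenvalue below $2E$, and add the $b_2$-family of Theorem \ref{SE} in the Sasaki Einstein case. However, the two steps that constitute the actual mathematical content are left unproved. The decisive one is the eigenvalue bound: you posit $\Delta_L h_\omega = h_{\Delta\omega} + c\,h_\omega$ and then explicitly ``grant'' $c<2E$. That inequality \emph{is} the theorem; moreover the posited identity itself is not obvious (a priori the commutator of $\Delta_L$ with $j$ could involve $d\omega$ and $\delta\omega$ separately, plus curvature terms, rather than a single scalar $c$). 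In the paper this step is supplied by Proposition 6.1 of \cite{AS12}: for a \emph{closed} section $\beta$ of $\Lambda^3_{27}$ one has $\Delta_L\, j(\beta) = \tfrac{\tau_0^2}{4}\, j(\beta)$, which under the normalization $\scal_g=42$ (so $\tau_0=4$, $E=6$) gives the eigenvalue $4 < 12 = 2E$; the companion identity $\delta\, j(\beta) = -2P(d^*\beta)$ likewise supplies the divergence-freeness that you also deferred to an unproved ``intertwining up to zeroth-order terms''. Without these two identities, or your own computation replacing them, the argument does not conclude.

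Second, your argument that a harmonic $3$-form is of pure type $27$ is not valid as sketched. The type-$1$ and type-$7$ components of a harmonic $3$-form are sections $f\varphi$ and $X\lrcorner\,\psi$ for some function $f$ and vector field $X$; the pointwise ${\rm G}_2$-type decomposition does not commute with the Hodge decomposition, so these components need not themselves be harmonic, and hence neither the non-harmonicity of $\varphi$ nor $b_1=0$ forces them to vanish. The paper's Proposition \ref{harmonic-decomp} uses a genuinely different mechanism: by Hijazi's theorem a harmonic form Clifford-annihilates the Killing spinor $\sigma$; since $\omega\mapsto\omega\cdot\sigma$ is ${\rm G}_2$-equivariant and the explicit computations $\varphi\cdot\sigma=-7\sigma$ and $(X\lrcorner\,\psi)\cdot\sigma = -4\,X\cdot\sigma$ show that the restrictions to $\Lambda^3_1$ and $\Lambda^3_7$ are nonzero, Schur's lemma forces the type-$1$ and type-$7$ parts of a harmonic form to vanish. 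Finally, a smaller point: in the Sasaki Einstein refinement your claim that the $b_2$- and $b_3$-families are ``separated by their ${\rm G}_2$-type'' cannot be correct as stated, because ${\rm S}^2_0\,\T M$ is ${\rm G}_2$-irreducible; any separation argument must use the finer transverse K\"ahler structure (for instance, $\Phi$-invariance of $h_\alpha$ versus $\Phi$-anti-invariance of the tensors coming from basic harmonic $3$-forms) --- a point which, to be fair, the paper itself also leaves implicit.
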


While there are numerous examples of complete Sasakian Einstein manifolds with nonzero second Betti number \cite{BG08}
there are relatively fewer examples with $b_3 \neq 0$. To our knowledge they include fourteen examples due
to C. Boyer \cite{Bo08} and ten recent examples by R. G. Gomez \cite{Go19}, all occurring in dimension $7$.
In Gomez's examples, $10 \leq b_3 \leq 20$. Note, however, that for complete $3$-Sasakian manifolds
Galicki and Salamon \cite{GaS96} showed that all their odd Betti numbers must vanish.
We also do not know any example of a {\it proper} nearly parallel ${\mathrm G}_2$ manifold with nonzero third Betti
number. Settling the existence question for such an example would be of interest.

We turn next to the special case of a simply connected closed Einstein $7$-manifold that is homogeneous with respect
to the isometric action of some semisimple Lie group. In \cite{WW18} it was shown that if the manifold is not
locally symmetric then it must be linearly unstable with the possible exception of the isotropy irreducible space
$\Sp(2)/\Sp(1)\approx \SO(5)/\SO(3)$. Here the embedding of $\Sp(1)$ is given by the irreducible complex $4$-dimensional
representation (which is symplectic), while the embedding of $\SO(3)$ is given by the irreducible complex $5$-dimensional
representation (which is orthogonal).  The Einstein metric in this unresolved case is known to be of proper nearly parallel $\rm{G}_2$ type.

\begin{thm} \label{Berger}
The isotropy irreducible Berger space $\Sp(2)/\Sp(1)_{irr} \approx \SO(5)/\SO(3)_{irr} $ is linearly unstable.
\end{thm}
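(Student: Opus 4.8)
The plan is to treat the statement as a problem in homogeneous harmonic analysis, since the Berger space is a rational homology sphere and Theorems \ref{SE} and \ref{b3} therefore give no information whatsoever. Write $M = G/K$ with $G = \SO(5)$ and $K = \SO(3)_{irr}$ embedded via its $5$-dimensional irreducible orthogonal representation. Because $M$ is isotropy irreducible, every $G$-invariant metric is proportional to the normal metric induced by the Killing form of $\mathfrak{so}(5)$, and this is the proper nearly parallel ${\rm G}_2$ Einstein metric in question. The first step is to record the isotropy module: under $K=\SO(3)$ one has $\mathfrak{so}(5)=\Lambda^2 V_2 = V_1\oplus V_3$ with $\mathfrak{k}=V_1$, so $\mathfrak m\cong V_3$, the $7$-dimensional irreducible $\SO(3)$-module (here $V_j$ denotes the spin-$j$ module). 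Decomposing trace-free symmetric $2$-tensors then gives the multiplicity-free splitting $S^2_0\mathfrak m\cong V_2\oplus V_4\oplus V_6$, which matches the restriction to the principal $\SO(3)\subset{\rm G}_2$ of the $27$-dimensional ${\rm G}_2$-module $\Lambda^3_{27}\cong S^2_0\mathfrak m$.

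Next I would invoke the Peter--Weyl/Frobenius reciprocity decomposition
$$\Gamma(S^2_0 T^*M)\;\cong\;\bigoplus_{\gamma\in\widehat{\SO(5)}} V_\gamma\otimes \Hom_K\!\big(V_\gamma, S^2_0\mathfrak m\big),$$
on which $\Delta_L$ acts block-diagonally, preserving each $G$-isotypic summand and acting on the finite-dimensional multiplicity space $\Hom_K(V_\gamma, S^2_0\mathfrak m)$. On a normal homogeneous space the rough Laplacian on the $\gamma$-summand is governed by the Casimir difference $\Cas_{\SO(5)}(\gamma)-\Cas_{\SO(3)}$, so the eigenvalues of $\Delta_L-2E=\nabla^*\nabla-2\mathring R$ grow with $\Cas_{\SO(5)}(\gamma)$; hence any destabilizing direction should sit in the smallest $\gamma$ whose restriction to $K$ meets $\{V_2,V_4,V_6\}$. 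I would single out the standard representation $\gamma=V_{(1,0)}$ of $\SO(5)$: its restriction to $\SO(3)_{irr}$ is exactly $V_2$, so $\Hom_K(V_{(1,0)},S^2_0\mathfrak m)$ is $1$-dimensional, the Casimir $\Cas_{\SO(5)}(V_{(1,0)})$ is strictly below those of the next contributing modules $V_{(1,1)}$ and $V_{(2,0)}$, and this summand contributes a single scalar eigenvalue of $\Delta_L$ on a $5$-dimensional space of genuine TT tensors. (These tensors are automatically trace-free, lying in $S^2_0\mathfrak m$, and transverse because the divergence takes values in the $\mathfrak m^*\cong V_3$ isotypic part while $V_3\not\subset V_{(1,0)}|_K=V_2$.) The problem thus reduces to showing this one eigenvalue is strictly below $2E$, which would give an explicit $5$-dimensional destabilizing subspace, hence coindex $\ge 5$.

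The main obstacle is the passage from the rough-Laplacian (Casimir) eigenvalue to the true $\Delta_L$-eigenvalue, i.e. control of the curvature term $\mathring R$, since $M$ is not symmetric and its canonical connection carries torsion. I see two routes. The direct route computes the full curvature tensor of the normal metric on $\SO(5)/\SO(3)_{irr}$ and evaluates $\langle\mathring R h,h\rangle$ on the explicit $V_2$-valued tensor built from the projection $S^2_0\mathfrak m\to V_2$; the one real subtlety here is tracking the rescaling of the $\SO(3)$-Casimir produced by the irreducible (Dynkin-index $10$) embedding $\SO(3)_{irr}\hookrightarrow\SO(5)$, standard but error-prone bookkeeping. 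The structural route uses the identification $S^2_0\mathfrak m\cong\Lambda^3_{27}$ together with the nearly parallel structure equation $d\varphi=\tau_0\,{*}\varphi$: this converts $\Delta_L$ on TT tensors into a first-order-corrected Hodge-type Laplacian on $\Omega^3_{27}$, as in the Alexandrov--Semmelmann deformation theory of nearly parallel ${\rm G}_2$ structures, whose eigenvalue on the $V_{(1,0)}$-summand is again computable from $\Cas_{\SO(5)}(V_{(1,0)})$ together with the universal constant in that structure equation.

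Finally I would assemble the numerics: with the normalization fixed by $E$ (equivalently by $\tau_0$ and $\dim M=7$), substitute $\Cas_{\SO(5)}(V_{(1,0)})$ and the $\SO(3)_{irr}$-Casimir of the $V_2$-summand, and verify the strict inequality $\Delta_L<2E$ on this summand, i.e. $\langle(\Delta_L-2E)h,h\rangle_{L^2}<0$. I expect the comparison to close comfortably, since the standard representation sits well below the threshold set by the Einstein constant; the delicate point, as noted, is fixing all normalizations consistently so that the final comparison with $2E$ is unambiguous. This exhibits the required transverse traceless $h$ and establishes the linear instability of the Berger space.
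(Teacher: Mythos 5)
Your setup coincides, in substance, with the paper's own after the translation $\Sp(2)\cong\Spin(5)$: the summand you single out, the vector representation $V_{(1,0)}$ of $\SO(5)$, is exactly the module $V(1,1)$ of $\Sp(2)$ that the paper uses (it is $5$-dimensional and restricts to $\SO(3)_{irr}$ as the spin-$2$ module, i.e.\ $\Sym^4 E$); your splitting $\Sym^2_0\m\cong V_2\oplus V_4\oplus V_6$ is the paper's decomposition \eqref{deco1}; and your transversality argument (the divergence lives in the $\m$-isotypic part, which $V_{(1,0)}$ misses) is precisely the paper's use of \eqref{hom}. So you have located the correct $5$-dimensional space of TT tensors and correctly reduced instability to showing that a single scalar eigenvalue of $\Delta_L$ lies below $2E$.

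That eigenvalue is where a genuine gap lies. Your computational mechanism --- ``on a normal homogeneous space the rough Laplacian on the $\gamma$-summand is governed by the Casimir difference $\Cas_{\SO(5)}(\gamma)-\Cas_{\SO(3)}$'' --- is the formula for the \emph{canonical reductive connection} $\bar\nabla$, for which $\bar\nabla^*\bar\nabla+q(\bar R)=\Cas_G$ holds, not for the Levi-Civita connection; since the Berger space is not symmetric, the torsion terms relating $\nabla$ to $\bar\nabla$ contribute, and they contribute on your very summand: the paper's values force $\nabla^*\nabla=\tfrac{19}{60}$ there (normalization $g=-B$), whereas the Casimir difference equals $\tfrac23-\tfrac15=\tfrac{7}{15}$. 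Your route (b) inherits the same issue, since the identification of the Hodge Laplacian with the Casimir also fails off symmetric spaces. So, as written, neither route establishes $\Delta_L<2E$; numerically the sign would in fact survive this particular error, but only because the margin is large, not because the step is justified. The idea your proposal is missing --- and which lets the paper avoid computing $\nabla^*\nabla$ altogether --- is that your summand consists of \emph{Killing tensors}: a LiE computation shows $\Hom_{\Sp(1)}(V(1,1),\Sym^3\m^{\scriptsize\C})=\{0\}$, so the $\Sp(2)$-invariant Killing operator $d:\Gamma(\Sym^2\T M)\to\Gamma(\Sym^3\T M)$ vanishes on it, and divergence-free Killing tensors satisfy $\Delta_L h=2\,q(R)h$ (Prop.\ 6.2 of \cite{HMS17}). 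This leaves only the zeroth-order term, which is computed through the canonical ${\rm G}_2$-connection: $q(\bar R)=-\Cas^{\sp(1)}$, suitably normalized equal to $\tfrac15$ on $\Sym^4E$, and the comparison formula \eqref{qr} (from \cite{AS12}) gives $q(R)=\tfrac{19}{60}$, whence $\Delta_L h=\tfrac{19}{30}h<\tfrac{9}{10}=2E$. If you wish to complete your version without the Killing-tensor observation, you must additionally compute the scalars by which the torsion cross-terms in $\nabla^*\nabla-\bar\nabla^*\bar\nabla$ act on the summand (scalars by Schur's lemma, the multiplicity space being one-dimensional); that is exactly the bookkeeping your sketch elides.
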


Recall that the homogeneous Einstein metrics (two up to isometry) on the Aloff-Wallach manifolds
$N_{k, l} = \SU(3)/T_{kl}$, where $T_{kl}$ is a closed circle subgroup,
are also of proper nearly parallel $\rm{G}_2$ type, with the exception of
one of the Einstein metrics on $N_{1,1}$, which is $3$-Sasakian. It was shown in \cite{WW18} that all
these Einstein metrics are linearly unstable. One therefore deduces the conclusion that all compact
simply connected homogeneous Einstein $7$-manifolds (many of them admitting a non-trivial real Killing spinor)
are linearly unstable.

Interestingly, the Berger space is a rational homology sphere, so the methods based on constructing destabilizing
directions via harmonic forms do not apply. Likewise, the Stiefel manifold $\SO(5)/\SO(3)$ (the embedding of
$\SO(3)$ is the usual $3$-dimensional vector representation) is also a homology $7$-sphere. The Einstein metric
here is instead of regular Sasakian Einstein type (the Stiefel manifold is a circle bundle over  the Hermitian
symmetric Grassmanian $\SO(5)/S({\rm O}(3){\rm O}(2))$. It was shown to be linearly unstable in \cite{WW18}
by examining the scalar curvature function for homogeneous metrics. But there are also non-homogeneous
$\nu$-unstable directions arising from eigenfunctions.

The proof of Theorem \ref{Berger} is based on a result of independent interest: in Section \ref{Berger section} we will 
show that the Berger space admits a $5$-dimensional space of trace and divergence free Killing $2$-tensors. These are symmetric
$2$-tensors with vanishing complete symmetrisation of their covariant derivative. A special motivation for studying
Killing tensors stems from the fact that they define first integrals of the geodesic flow, i.e. functions constant on
geodesics. On the Berger space the constructed  Killing tensors turn out to be eigentensors for the Lichnerowicz Laplacian
for an eigenvalue less than the critical $2E$.

\medskip

\noindent{\bf Acknowledgements:}

The first author would like to thank P.-A. Nagy and G. Weingart for helpful discussions on the topic of this
article. He is also grateful for support by the Special Priority Program SPP 2026 "Geometry at Infinity" funded by the DFG. 
The third author acknowledges partial support by a Discovery Grant of NSERC.


%
%
\section{\bf Sasaki Einstein manifolds with $b_2 >0$}
%
%
Let $(M^{2n+1}, g, \xi, \eta, \Phi)$ be a Sasaki Einstein manifold as defined in the Introduction.
(This definition may be logically redundant but it allows us to keep technicalities to a minimum.)
It then follows that $\Phi$ is skew-symmetric, and for all tangent vectors $X$ in $M$ we have
$$\nabla_X \xi = - \Phi(X).$$
Also, the Einstein constant $E$ of $g$ is fixed to be $2n$, and this in turn fixes the Killing constant
of the Killing spinors to be $\pm \frac{1}{2}$ depending on the orientation.

The Killing field $\xi$ gives rise to a Riemannian foliation structure on $M$, as well as an
orthogonal decomposition
$$ TM = \mathscr{L} \oplus \mathscr{N} $$
where $\mathscr{L}$ is the line bundle determined by $\xi$ and $\mathscr{N}$ is the normal bundle
to the foliation. There is a transverse K\"ahler structure on $\mathscr{N}$ with a transverse
Hodge theory associated with the basic de Rham complex. Good references for this material are
 \cite{EH86} and sections 7.2, 7.3 in \cite{BG08}. In particular, the transverse almost complex
 structure is given by the endomorphism $\Phi|\mathscr{N}.$

Recall that a $k$-form $\omega$ on $M$ is called {\em basic} if $L_\xi \omega = 0 = \xi \lrcorner \, \omega$.
These properties are satisfied by $\Phi$ as well by the $K$-contact property. In the transverse Hodge
theory, the role of the K\"ahler form is played by $d\eta$, which is $\Phi$-invariant. As usual we can
define the adjoint $\Lambda$ of the wedge product with $d\eta$ and the basic elements of the kernel of $\Lambda$
are called the primitive basic forms. By Proposition 7.4.13 in \cite{BG08}, a lemma of Tachibana shows that
any harmonic $k$-form on $M$ is horizontal, and this in turn implies that it is basic, and
primitive and harmonic for the basic cohomology.

We shall also need the fact that a harmonic $2$-form on $M$ is $\Phi$-invariant, i.e., as a basic $2$-form it is
of type $(1, 1)$. This follows from the general fact that there are no nonzero basic harmonic $(0, k)$-forms,
$1 \leq k \leq n, $ which is the analogue of Bochner's theorem that on a compact complex manifold with positive first
Chern class and complex dimension $n$, there are no non-trivial holomorphic $k$-forms, $1 \leq k \leq n$.
A proof of this analogue is indicated on pp. 66-67 of \cite{VC17}  (see also \cite{GNT16}).
Note that the $\Phi$-invariance of the transverse covariant derivative and transverse curvature tensor,
which is the analogue of the K\"ahler condition, is needed in the argument.

Now let $\alpha$ be a harmonic $2$-form on our Sasaki Einstein manifold. The candidate for a destabilizing
direction is the symmetric $2$-tensor $h_{\alpha}$ defined by
\begin{equation}  \label{SE-TT}
   h_{\alpha}(X, Y) := \alpha(X, \Phi(Y))
\end{equation}
for arbitrary tangent vectors $X, Y$ on $M$.

It turns out that an efficient method to compute the action of the Lichnerowicz Laplacian on $h_{\alpha}$
is to take advantage of the canonical metric connection with skew torsion preserving the Sasakian structure
rather than using the Levi-Civita connection. We will also use the fact that the Lichnerowicz Laplacian  can be written
as $\Delta_L = \nabla^*\nabla + q(R)$, where $q(R)$ is an endomorphism on symmetric tensors  fibrewise defined by
$
q(R) = \sum (e_i \wedge e_j)_\ast \circ R (e_i \wedge e_j)_\ast
$
with an orthonormal basis $\{e_i \}$, where for any $A \in \Lambda^2 \cong \so(n)$ we denote with $A_\ast$ the natural action of $A$ on
symmetric tensors. See \cite{SWe19} for the general context of
these endomorphisms and their relation with Weitzenb\"ock formulae.

Recall that Sasakian manifolds are equipped with a canonical metric connection $\bar\nabla$ defined by the equation
$$
g(\bar\nabla_X Y, Z) \;=\; g(\nabla_X Y,  Z)  \;+\; \tfrac12 (\eta \wedge d\eta) (X, Y, Z) \ ,
$$
where the $3$-form $\eta \wedge d\eta$ is precisely the torsion of the connection.
Note that the canonical connection $\bar \nabla$  for any tangent vector $X$ can also be written as $\bar\nabla_X = \nabla_X + A_X$ with
$A_X:= - \eta(X) \, \Phi  + \xi \wedge \Phi(X)$. $\bar \nabla$ preserves the basic forms. The restricted holonomy group
of $\bar \nabla$ lies in $\U(n) \subset \SO(2n+1)$ and we have $\bar\nabla \Phi = 0$, $\bar \nabla \eta = 0$,
and $\bar \nabla d\eta = 0$.

Furthermore, the curvature $\bar R$ of $\bar\nabla$ and its action on tensors is given by
$$
\bar R_{X, Y} \;=\; R_{X, Y} \,+\, \tfrac12 \, d\eta(X,Y) \, d\eta \,-\, \Phi(X) \wedge \Phi(Y)  \,+\, \xi  \wedge (X \wedge Y)\xi,
$$
where $(X \wedge Y) \xi := g(X, \xi) Y - g(Y, \xi) X$, and we have identified vectors with covectors as usual via $g$.
Hence, we have  $\bar R_{X, Y} = R_{X, Y} \,-\, \Phi(X) \wedge \Phi(Y)  \,+\, \xi  \wedge (X \wedge Y)\xi $ \,for the action of $\bar R_{X, Y} $ on $\Phi$-invariant tensors.
As a consequence we obtain for these tensors the formula
\begin{equation}\label{diff}
q(\bar R) - q(R) \;=\; -\tfrac12 \sum (e_i \wedge e_j)_\ast \circ (\Phi(e_i) \wedge \Phi(e_j))_* \;+\; \sum (\xi \wedge e_j)_\ast \circ (\xi \wedge e_j)_\ast \ ,
\end{equation}
where $q(\bar R)$ denotes the  curvature endomorphism with respect to the connection $\bar\nabla$ and its curvature $\bar R$.
On specific spaces this difference can be further computed. The result for the present situation
is given in the following

\begin{lemma}
Let $(M^{2n+1}, g, \xi, \eta, \Phi)$ be a Sasaki Einstein manifold. Then we have
$$
q(\bar R) \,-\, q(R) \;=\;
\left\{
\begin{array}{ll}
\,\;\; 2\, \id &  \qquad \mbox{\rm on} \quad  \Lambda^2 \T M\\
-2 \, \id  &  \qquad \mbox{\rm on} \quad  \Sym^2_0 \T M \\
\end{array}
\right.
$$
for  $\Phi$-invariant and basic tensors.
\end{lemma}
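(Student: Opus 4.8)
The plan is to evaluate the right–hand side of (\ref{diff}) on each tensor type by passing to a $\Phi$-adapted orthonormal frame and contracting the resulting sums. I would set $e_0=\xi$ and choose $e_1,\dots,e_{2n}$ an orthonormal frame of $\mathscr{N}$ on which $J:=\Phi|_{\mathscr{N}}$ is a complex structure; since $\Phi\xi=0$, every sum involving $\Phi(e_i)$ effectively ranges only over the horizontal indices $1,\dots,2n$, and likewise the $j=0$ term of the second sum vanishes. Throughout I would use that $h$ is basic, so $h(\xi,\cdot)=0$, and $\Phi$-invariant, so $h(JX,JY)=h(X,Y)$ on $\mathscr{N}$, together with the two elementary identities $\sum_i g(e_i,X)\,Je_i=JX$ and, as endomorphisms, $\sum_{i,j}(Je_i\wedge Je_j)\circ(e_i\wedge e_j)=-2\,\mathrm{proj}_{\mathscr{N}}$; the latter follows from $(e_i\wedge e_j)(W)=g(e_i,W)e_j-g(e_j,W)e_i$ and $J^2=-\id$ on $\mathscr{N}$.

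First I would compute the second sum $S_2:=\sum_j(\xi\wedge e_j)_\ast\circ(\xi\wedge e_j)_\ast$. Expanding the natural derivation action $(A_\ast h)(U,V)=-h(AU,V)-h(U,AV)$ twice, and discarding at each stage the terms containing $h(\xi,\cdot)$ by basicness, one finds
\begin{equation*}
S_2 h(U,V)\;=\;2\,g(\xi,U)\,g(\xi,V)\,\tr_{\mathscr{N}}h\;-\;2\,h(U,V).
\end{equation*}
The point to be careful about here is to retain the two cross terms $-g(e_j,U)h(e_j,V)-g(e_j,V)h(U,e_j)$ produced by the second application, whose sum over $j$ is exactly the $-2h$. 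On $\Lambda^2\T M$ the diagonal of $h$ vanishes, and on $\Sym^2_0\T M$ one has $\tr_g h=0$, so the first term drops in both cases and $S_2=-2\,\id$ on basic tensors of either type.

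Next I would treat the harder first sum $S_1:=-\tfrac12\sum_{i,j}(e_i\wedge e_j)_\ast\circ(Je_i\wedge Je_j)_\ast$. Applying the derivation rule twice gives the four–term expansion $h(CAU,V)+h(AU,CV)+h(CU,AV)+h(U,CAV)$ with $A=e_i\wedge e_j$ and $C=Je_i\wedge Je_j$. The two outer terms contract through $\sum_{i,j}CA=-2\,\mathrm{proj}_{\mathscr{N}}$ and basicness to $+2h(U,V)$. The two inner (cross) terms are where $\Phi$-invariance enters: using $\sum_i g(e_i,U)Je_i=JU$ one reduces $\sum_{i,j}h(AU,CV)$ and $\sum_{i,j}h(CU,AV)$ to combinations of $h(JV,JU)=h(V,U)$ and of $\mu:=\sum_j h(e_j,Je_j)$. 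The symmetry type of $h$ is now decisive, since $h(V,U)=+h(U,V)$ for symmetric $h$ but $h(V,U)=-h(U,V)$ for a $2$-form, reversing the sign of the cross contribution. For symmetric $\Phi$-invariant $h$ one checks $\mu=0$ automatically (from $h(e_j,Je_j)=-h(Je_j,e_j)$), the cross terms contribute $-2h$, and so $S_1=0$; for a primitive $2$-form one again has $\mu=0$, the cross terms contribute $+2h$, and so $S_1=4h$.

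Combining the two sums via $q(\bar R)-q(R)=S_1+S_2$ yields $0-2\,\id=-2\,\id$ on $\Sym^2_0\T M$ and $4\,\id-2\,\id=2\,\id$ on primitive $(1,1)$-forms in $\Lambda^2\T M$, as asserted. It is the primitivity, i.e. $\mu=0$, that one should invoke in the $\Lambda^2$ case; it holds for the harmonic forms to which the lemma is applied (being primitive basic $(1,1)$-forms), and in the symmetric case $\mu=0$ together with trace-freeness corresponds precisely to primitivity of the underlying form. The main obstacle is the bookkeeping in $S_1$: expanding the composition of two $\so(2n+1)$-actions into the correct four Leibniz terms, carrying out the double contraction through the complex–structure identities without dropping the $\mu$-dependent pieces, and tracking how the symmetric versus skew symmetry of $h$ flips the sign of the decisive cross terms. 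A secondary, easy-to-overlook point is the extra $-2h$ concealed in $S_2$, without which the constants would fail to come out as $\pm 2$.
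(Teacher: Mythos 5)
Your proof is correct and follows essentially the same route as the paper's: both evaluate the two sums in \eqref{diff} by expanding the composed actions $(A_*B_*h)(X,Y)$ into outer terms (contracted via $\sum_{i,j}(\Phi(e_i)\wedge\Phi(e_j))(e_i\wedge e_j)=-2\,\mathrm{proj}_{\mathscr{N}}$ and $\sum_j(\xi\wedge e_j)^2$) and cross terms, then invoking basicness, $\Phi$-invariance, trace-freeness, and the symmetry type of $h$. Your explicit tracking of $\mu=\sum_j h(e_j,\Phi(e_j))$ is in fact a refinement of the paper's own argument, which silently discards the term $g(\Phi(X),Y)\sum_j h(e_j,\Phi(e_j))$: that is automatic for symmetric $\Phi$-invariant tensors, but in the $\Lambda^2$ case it genuinely requires primitivity (on $d\eta$ itself the difference operator acts as $(2-4n)\,\id$, not $2\,\id$), so your invocation of primitivity --- satisfied by the harmonic forms to which the lemma is applied --- is exactly the right way to close that step.
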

\begin{proof}
We start with a remark to understand the action of the curvature term $q(R)$ on $2$-tensors (symmetric or skew-symmetric). Let $A, B \in \Lambda^2 \T \cong \End^- \T$ and let
$h$ be a $2$-tensor. Then the composed action of $A$ and $B$ on $h$ is given by
$$
(A_* B_* h) (X, Y) \;=\; h(B A X, Y) \,+\, h(A X, B Y) \,+\, h(B X, A Y) \,+\, h(X, B A Y ) \ .
$$
Hence, for computing the action of  the first summand in \eqref{diff}  on $2$-tensors we need  the following formula on tangent vectors $X$
$$
-\tfrac12 \sum (\Phi(e_i) \wedge \Phi(e_j))_\ast \,(e_i \wedge e_j)_*X \;=\; - \sum (\Phi(X) \wedge \Phi(e_j))_*  e_j \;=\; -\Phi^2(X) \;=\; X \mod \xi
$$
where we can neglect any multiples of $\xi$ since in the end we want to apply our difference formula to basic tensors. Similarly we compute the
action of the second summand  in \eqref{diff}  on vector fields. Here we obtain
$$
\sum (\xi \wedge e_j)_\ast \,  (\xi \wedge e_j)_\ast X \;=\; g(\xi, X) \, \sum (\xi \wedge e_j)_* e_j \,-\, (\xi \wedge X)_* \xi \;=\; - X \mod \xi \ .
$$
Next we have to compute
$$
- \tfrac12 \sum  [h((e_i \wedge e_j)_*X, (\Phi(e_i) \wedge \Phi(e_j))_* Y) \;+\; h( (\Phi(e_i) \wedge \Phi(e_j))_*X,  (e_i \wedge e_j)_*Y)]
\phantom{xxxxxxxx}
$$
\bea
&=&
-\sum [ h(e_j, (\Phi(X) \wedge \Phi(e_j)_*Y) \;+\; h((\Phi(Y) \wedge \Phi(e_j))_*X, e_j )]\\[1ex]
&=&
\quad \sum [g(\Phi(e_j), Y) h(e_j, \Phi(X)) \;+\; [g(\Phi(e_j), X) h(\Phi(Y), e_j)]\\[.5ex]
&=&
-2 h(\Phi(Y), \Phi(X)) \;=\; -2 h(Y, X) \;=\; \left\{
\begin{array}{ll}
\;\; 2\, h(X, Y)&  \qquad \mbox{for} \quad \;  h \in  \Lambda^2 \T M\\
-2 \, h(X, Y)  &  \qquad \mbox{for } \quad   h \in \Sym^2 \T M \\
\end{array}
\right.
\eea
Finally, we note $\sum h((\xi \wedge e_j)_*X, (\xi \wedge e_j)_*Y) = g(\xi, X) g(\xi, Y) \sum h(e_j, e_j) = 0$ on basic and trace-free $2$-tensors $h$.
Then combining the formulas above finishes the proof of the lemma.
\end{proof}

\medskip

\begin{lemma}
On tracefree, divergence-free, $\Phi$-invariant and basic $2$-tensors we have the formula:   $\bar\nabla^* \bar\nabla  \,-\, \nabla^*\nabla \,=\, - 2 \, \id$.
\end{lemma}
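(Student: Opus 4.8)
The plan is to compute the difference $\bar\nabla^*\bar\nabla - \nabla^*\nabla$ by expanding $\bar\nabla$ in terms of $\nabla$ using the given formula $\bar\nabla_X = \nabla_X + A_X$ with $A_X = -\eta(X)\Phi + \xi\wedge\Phi(X)$. Since both $\bar\nabla^*\bar\nabla$ and $\nabla^*\nabla$ are second-order operators, I expect the difference to be an algebraic (zeroth-order) endomorphism once I use that the tensor $h$ is basic, $\Phi$-invariant, trace-free, and divergence-free; the first-order terms should cancel or vanish under these hypotheses.

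First I would write the rough Laplacian as $\bar\nabla^*\bar\nabla h = -\sum_i \bar\nabla_{e_i}\bar\nabla_{e_i} h + \bar\nabla_{\sum_i \nabla_{e_i} e_i}\, h$ (and similarly for $\nabla^*\nabla$) using an orthonormal frame, being careful that the connection on the frame itself is the Levi-Civita one in both cases so that the lower-order frame terms match. Substituting $\bar\nabla_{e_i} = \nabla_{e_i} + A_{e_i}$ and iterating gives, schematically,
\begin{equation*}
\bar\nabla^*\bar\nabla - \nabla^*\nabla \;=\; -\sum_i\Bigl(A_{e_i}\circ A_{e_i} \;+\; A_{e_i}\circ\nabla_{e_i} \;+\; \nabla_{e_i}\circ A_{e_i} \;+\; (\nabla_{e_i}A)_{e_i}\Bigr),
\end{equation*}
where I have reorganized the cross terms into a symmetric first-order part together with the derivative $(\nabla_{e_i}A)_{e_i}$ of the tensor $A$. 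The plan is then to show that the genuinely first-order contributions assemble into multiples of $\delta_g h$, $\tr_g h$, and the $\xi$-contraction $\xi\lrcorner h$, all of which vanish on our class of tensors, leaving only the purely algebraic terms $-\sum_i A_{e_i}\circ A_{e_i}$ and $-\sum_i (\nabla_{e_i}A)_{e_i}$.

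Next I would evaluate these two algebraic pieces. For $A_{e_i}\circ A_{e_i}$, the action of $A_{e_i} = -\eta(e_i)\Phi + \xi\wedge\Phi(e_i)$ on a $2$-tensor is computed via the same composition formula $(A_* B_* h)(X,Y) = h(BAX,Y) + h(AX,BY) + h(BX,AY) + h(X,BAY)$ used in the previous lemma, and summing over $i$ should collapse the $\Phi$-terms using $\Phi^2 = -\I + \eta\otimes\xi$ exactly as in that lemma, with the $\xi$-wedge terms either dropping out on basic tensors or contributing a constant multiple of the identity. For $(\nabla_{e_i}A)_{e_i}$, I would use $\nabla_X\xi = -\Phi(X)$ and $\nabla_X\Phi = -\,\xi\wedge X + \eta\,\cdot(\text{something})$ (the standard Sasakian derivative of $\Phi$) to express the derivative of $A$ algebraically, then contract; again the basic and $\Phi$-invariance conditions should kill everything except a constant.

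The main obstacle I anticipate is bookkeeping the first-order terms carefully enough to confirm they really do reduce to $\delta_g h$, $\tr_g h$, and $\xi\lrcorner h$ (and not some residual derivative), since the cross terms $A_{e_i}\circ\nabla_{e_i}$ and $\nabla_{e_i}\circ A_{e_i}$ are not individually tensorial and only their combination, after using the divergence-free and basic hypotheses, becomes zeroth order. A clean way to sidestep some of this is to observe that $\bar\nabla^*\bar\nabla - \nabla^*\nabla$ and $q(\bar R) - q(R)$ are related through the Weitzenböck identity $\Delta_L = \nabla^*\nabla + q(R) = \bar\nabla^*\bar\nabla + q(\bar R) + (\text{torsion correction})$; since the preceding lemma already pins down $q(\bar R) - q(R) = -2\,\id$ on $\Sym^2_0\T M$ for these tensors, consistency forces the constant here, and I would use this cross-check to verify that the final value is indeed $-2\,\id$ rather than grinding every term independently.
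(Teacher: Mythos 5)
Your opening move (work in a normal frame and expand one connection in terms of the other) is the same as the paper's, but you expand in the less favorable direction, and this matters. The paper writes $\nabla_{e_i} = \bar\nabla_{e_i} - A_{e_i}$ and expands $\nabla^*\nabla$ in terms of $\bar\nabla$, precisely because $A$ is built from the $\bar\nabla$-parallel tensors $\eta, \Phi, \xi$: there is then no $(\nabla A)$-type term at all, and the whole difference collapses to $\sum A_{e_i}A_{e_i} - 2\sum A_{e_i}\bar\nabla_{e_i}$. The first-order piece is manageable exactly because $\bar\nabla_{e_i}h$ is again trace-free, $\Phi$-invariant and basic, so the $\Phi$-part of $A$ annihilates it and the $\xi\wedge\Phi(e_i)$-part produces $(\delta h)\circ\Phi = 0$. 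In your expansion in terms of $\nabla$ you must additionally compute $\sum_i(\nabla_{e_i}A)_{e_i}$ from $\nabla\xi = -\Phi$ and $(\nabla_X\Phi)Y = g(X,Y)\xi - \eta(Y)X$, and, more seriously, your structural expectation for the cross terms is wrong: $\sum_i A_{e_i}\nabla_{e_i}h$ does \emph{not} assemble into multiples of $\delta_g h$, $\tr_g h$ and $\xi\lrcorner h$. Because $h$ is basic, terms such as $(\nabla_{\Phi X}h)(\xi,Y)$ are not proportional to those invariants but become algebraic via Leibniz, e.g. $(\nabla_{\Phi X}h)(\xi,Y) = -h(\nabla_{\Phi X}\xi,Y) = h(\Phi^2 X,Y) = -h(X,Y) \bmod \xi$; such ``first-order'' pieces feed directly into the constant you are trying to compute, so a bookkeeping scheme that discards them as vanishing cannot produce $-2\,\id$.

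The second, and decisive, gap is that your proposed shortcut to ``force the constant'' is circular. There is no identity $\Delta_L = \bar\nabla^*\bar\nabla + q(\bar R) + (\text{known torsion correction})$ available at this stage: $\bar\Delta = \bar\nabla^*\bar\nabla + q(\bar R)$ is a genuinely different operator from $\Delta_L = \nabla^*\nabla + q(R)$, and determining their difference on $\Sym^2_0 \T M$ is exactly what the present lemma, combined with the preceding one, accomplishes --- that is the content of the Corollary that follows them. Knowing $q(\bar R) - q(R) = -2\,\id$ by itself gives no information about $\bar\nabla^*\bar\nabla - \nabla^*\nabla$; indeed, if you assumed the torsion correction were zero, you would conclude $\bar\nabla^*\bar\nabla - \nabla^*\nabla = +2\,\id$, the wrong sign and the wrong answer (the true correction is $-4\,\id$, which is precisely the Corollary's statement). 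So the consistency check you intend to lean on instead of ``grinding every term'' cannot replace the computation: either the grinding, or the paper's trick of expanding with respect to the connection that makes $A$ parallel, is the proof.
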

\begin{proof}
Let $\{e_i\}$ be a local orthonormal basis with $\nabla_{e_i} e_i = 0 =\bar  \nabla_{e_i} e_i $ at an arbitrary but
fixed point $p \in M$. Recall that the torsion of $\bar\nabla$ is skew-symmetric. Computing at the point $p$ we get
\bea
\bar\nabla^* \bar\nabla  \,-\, \nabla^*\nabla  &=& - \sum  \bar \nabla_{e_i} \bar \nabla_{e_i}  \;+\;  \sum \nabla_{e_i} \nabla_{e_i}
\;=\; \sum (\bar \nabla_{e_i} - A_{e_i} ) (\bar \nabla_{e_i} - A_{e_i} ) \;-\; \sum \bar \nabla_{e_i} \bar \nabla_{e_i} \\[1ex]
&=& \quad \sum A_{e_i }A_{e_i } \;-\; 2 \sum A_{e_i } \bar \nabla_{e_i} \ .
\eea
On $\Phi$-invariant and basic tensors the first summand  reduces to $ \sum A_{e_i }A_{e_i } = \sum  A_{e_i } (\xi \wedge \Phi(e_i))_*$.
Computing this first on tangent vectors $X$ (with  the factors in  reversed order as above) we obtain
$$
\sum   (\xi \wedge \Phi(e_i))_* A_{e_i } X \;=\; \sum   (\xi \wedge \Phi(e_i))_* (-\eta(e_i) \Phi(X) \,+\, (\xi \wedge \Phi(e_i))_*X)
\phantom{xxxxxxxx}
$$
\bea
&=&
\sum   (\xi \wedge \Phi(e_i))_* (\eta(X) \Phi(e_i) \,-\, g(\Phi(e_i), X) \, \xi )
\;=\; - \sum g(\Phi(e_i), X) \Phi(e_i) \mod \xi\\[1ex]
&=& \Phi^2(X) \mod \xi \;=\; - X \mod \xi \ .
\eea
Here we used $\Phi(\xi)= 0$ and $\xi \perp \mathrm{Im} (\Phi)$. Moreover, for tracefree, $\Phi$-invariant and basic $2$-tensors $h$ we obtain
$$
\sum h(A_{e_i} X, (\xi \wedge \Phi(e_i))_*Y )  \;+\;  h(   (\xi \wedge \Phi(e_i))_* X, A_{e_i} Y) \phantom{xxxxxxxx}\phantom{xxxxxxxxxxxx}
$$
$$
\;=\; \sum h( - \eta(e_i) \Phi(X) \,+\, g(\xi, X) \Phi(e_i), \,g(\xi, Y)\, \Phi(e_i)) \;+\; (X \leftrightarrow Y) \phantom{xxxxxxxx}
$$
$$
= \; g(\xi, X) \, g(\xi, Y) \, \sum h(\Phi(e_i), \Phi(e_i)) \;+\; (X \leftrightarrow Y)  \;=\; 0 \ .\phantom{xxxxxxxxxxxxxx}
$$
The last equation holds since $h$ is tracefree and $\Phi$-invariant. The calculation so far shows that $\sum A_{e_i }A_{e_i } h = -2h$.
Finally we have to compute the second summand in our formula for $\bar\nabla^* \bar\nabla  \,-\, \nabla^*\nabla $. Here we obtain
\bea
\sum A_{e_i } \bar \nabla_{e_i} h &=& \sum(-\eta(e_i)\Phi \,+\, (\eta \wedge \Phi(e_i))_* \bar \nabla_{e_i} h \;=\;
\sum  (\eta \wedge \Phi(e_i))_* \bar \nabla_{e_i} h
 \eea
 since $\bar \nabla_{e_i} h$ is again trace-free, $\Phi$-invariant, and basic. We conclude $\sum A_{e_i } \bar \nabla_{e_i} h = 0$ since
 we have $\sum  (\eta \wedge \Phi(e_i))_* X = \sum (g(\xi, X) \, \Phi(e_i) \,-\, g(\Phi(e_i), X) \,\xi)$.
 Indeed, clearly
 $$\sum  (\eta \wedge \Phi(e_i))_* \bar \nabla_{e_i} h(X, Y) = 0,$$
 if $g(X, \xi)=g(Y, \xi)=0$ or $X=Y=\xi$, since $\bar \nabla_{e_{i}} h$ is basic. Moreover, for $g(\xi, Y)=0$, a straightforward computation gives
 $$\sum  (\eta \wedge \Phi(e_i))_* \bar \nabla_{e_i} h(\xi, Y)=(\delta h)(\Phi(Y))=0,$$
 since $h$ is divergence-free.
\end{proof}

\medskip

We consider the two Laplace type operators $\Delta = \nabla^*\nabla + q(R)$ and $\bar \Delta = \bar\nabla^*\bar \nabla + q(\bar R)$,
where the operator $\Delta$ is just the Lichnerowicz operator on tensors. The operator $\bar \Delta$ has the important property that
it commutes with parallel bundle maps  (see \cite{SWe19}, p. 283). Combining the last two lemmas we obtain

\begin{cor}
On  $\Phi$-invariant,  divergence-free and basic tensors we have 
$$
\bar \Delta  \,-\,  \Delta \;=\;
\left\{
\begin{array}{ll}
\quad 0 &  \qquad \mbox{on} \quad  \Lambda^2 \T M\\
-4 \, \id  &  \qquad \mbox{on} \quad  \Sym^2_0 \T M \\
\end{array}
\right.
$$
\end{cor}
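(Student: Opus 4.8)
The plan is to exploit the fact that $\Delta$ and $\bar\Delta$ are built from a rough Laplacian and a curvature endomorphism of the \emph{same} algebraic type, so that their difference splits with no interaction between the two pieces:
$$
\bar\Delta \,-\, \Delta \;=\; \bigl(\bar\nabla^*\bar\nabla \,-\, \nabla^*\nabla\bigr) \;+\; \bigl(q(\bar R) \,-\, q(R)\bigr).
$$
First I would fix a tensor $h$ that is $\Phi$-invariant, basic and divergence-free, and record that on $\Lambda^2\T M$ skew-symmetry forces $h$ to be trace-free automatically, while on $\Sym^2_0\T M$ trace-freeness is part of the hypothesis. In either case $h$ then satisfies the joint hypotheses of the two preceding lemmas, so both may be applied simultaneously.

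Next I would simply read off the two contributions term by term. The first lemma supplies the curvature part, $q(\bar R)-q(R)=2\,\id$ on $\Lambda^2\T M$ and $q(\bar R)-q(R)=-2\,\id$ on $\Sym^2_0\T M$, and the second lemma supplies the rough-Laplacian part, $\bar\nabla^*\bar\nabla-\nabla^*\nabla=-2\,\id$, uniformly on such $2$-tensors. Adding the two identities gives $2-2=0$ on $2$-forms and $-2-2=-4$ on trace-free symmetric $2$-tensors, which is precisely the asserted formula.

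Since the corollary is pure bookkeeping once the two lemmas are available, there is no genuine analytic obstacle in the assembly itself; the only point that requires care is verifying that the hypotheses of the two lemmas are mutually compatible and jointly coincide with those of the corollary. In particular I would check that the second lemma does apply in the skew-symmetric case, even though it is phrased for $2$-tensors in general: its proof uses trace-freeness only through the vanishing of $\sum h(\Phi(e_i),\Phi(e_i))$, which for a $2$-form holds term by term, and divergence-freeness only through $\delta h=0$, both of which a $\Phi$-invariant basic harmonic $2$-form enjoys. All the substantive work has in fact already been carried out in deriving the curvature identity \eqref{diff} together with the two lemmas, so the corollary is immediate.
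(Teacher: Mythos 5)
Your proposal is correct and is essentially the paper's own argument: the corollary is obtained there precisely by adding the curvature difference from the first lemma to the rough-Laplacian difference from the second. Your extra check that the second lemma's trace-freeness hypothesis is automatic on $\Lambda^2 \T M$ (and built into $\Sym^2_0 \T M$) is a point the paper leaves implicit, but it is the same proof.
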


\medskip

Let $\alpha \in \Omega^2(M)$ be a harmonic $2$-form, which must be $\Phi$-invariant and basic. Hence we can
apply the difference formula above and obtain $\bar  \Delta \alpha = 0$.  Now the bundle of $\Phi$-invariant $2$-forms can be identified
with the bundle of symmetric $2$-tensors by the $\bar\nabla$-parallel bundle map $\alpha \mapsto h_\alpha$, where the symmetric
$2$-tensor $h_\alpha$ is given by (\ref{SE-TT}). Then, because  $\bar\Delta$ commutes with parallel bundle maps
we also have $\bar\Delta h_\alpha = 0$. Using again the difference formula above, now for the case $\Sym^2 \T M$, we obtain
$\Delta h_\alpha = 4 h_\alpha$. As the Einstein constant is $2n$, $(\Delta - 2E) h_{\alpha} = (4-4n) h_{\alpha}$ and hence
the instability condition (\ref{instability}) is satisfied when $n > 1$.

It remains to check that $h_\alpha$ is a TT-tensor. First, we see that $\tr_g (h_\alpha) = g(\alpha, d\eta) = 0$
since harmonic forms on Sasaki Einstein manifolds are primitive. Moreover, $\delta_g h_\alpha = 0$ follows by an easy
calculation from the assumption $d^* \alpha = 0$ and the fact that $\alpha$ is basic. Thus we have proved

\begin{thm} $($Theorem \ref{SE}$)$
Let  $(M^{2n+1}, g, \xi, \eta, \Phi)$ be a compact Sasaki Einstein manifold with $n>1$ and $b_2 >0$.
Then the Einstein metric $g$ is linearly unstable.  \eop
\end{thm}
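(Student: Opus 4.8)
The plan is to exploit the positivity of $b_2$ to produce an explicit destabilizing direction. Since $b_2 > 0$, Hodge theory furnishes a nonzero harmonic $2$-form $\alpha$ on $M$, and by the transverse Hodge theory recalled above such an $\alpha$ is automatically $\Phi$-invariant, basic, and primitive. I would take as candidate the symmetric $2$-tensor $h_\alpha$ defined in (\ref{SE-TT}), and aim to show that it is transverse traceless and satisfies the strict inequality (\ref{instability}).

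The decisive step is to compute the Lichnerowicz eigenvalue of $h_\alpha$, and here the efficient route is to transfer the computation from the Levi-Civita Laplacian $\Delta$ to the canonical Laplacian $\bar\Delta = \bar\nabla^*\bar\nabla + q(\bar R)$. Since $\alpha$ is harmonic we have $\Delta\alpha = 0$, and by the Corollary the difference $\bar\Delta - \Delta$ vanishes on $\Phi$-invariant basic $2$-forms, so $\bar\Delta\alpha = 0$. The correspondence $\alpha \mapsto h_\alpha$ is a $\bar\nabla$-parallel bundle isomorphism from $\Phi$-invariant $2$-forms onto symmetric $2$-tensors --- this is exactly where $\bar\nabla\Phi = 0$ enters --- and since $\bar\Delta$ commutes with parallel bundle maps we obtain $\bar\Delta h_\alpha = 0$. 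Applying the Corollary once more, now on $\Sym^2_0 \T M$ where $\bar\Delta - \Delta = -4\,\id$, yields $\Delta h_\alpha = 4\, h_\alpha$. With the Einstein constant $E = 2n$ this gives $(\Delta - 2E) h_\alpha = (4 - 4n) h_\alpha$, which is strictly negative on the line spanned by $h_\alpha$ precisely when $n > 1$, establishing (\ref{instability}).

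It then remains to verify the transverse traceless conditions. Tracelessness follows from $\tr_g h_\alpha = g(\alpha, d\eta)$ together with the primitivity of harmonic forms on a Sasaki Einstein manifold, which forces this pairing to vanish. Divergence-freeness should follow by a direct computation from $\delta_g \alpha = 0$ and the fact that $\alpha$ is basic, relating $\delta_g h_\alpha$ to $\delta_g \alpha$ through $\bar\nabla\Phi = 0$; I expect this to be routine. Running the same construction over a basis of harmonic $2$-forms, and noting that $\alpha \mapsto h_\alpha$ is injective while every $h_\alpha$ lies in the same eigenspace for the negative eigenvalue $4 - 4n$, would in fact give the stronger coindex bound $\geq b_2$ of Theorem \ref{SE}.

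The genuine obstacle, and the place where the real work sits, is the package of comparison identities rather than this final assembly: the two Lemmas computing $q(\bar R) - q(R)$ and $\bar\nabla^*\bar\nabla - \nabla^*\nabla$, and above all the structural fact that $\bar\Delta$ commutes with $\bar\nabla$-parallel bundle maps. This commutation is the lever that converts the harmonicity of the $2$-form into the eigentensor equation for the symmetric tensor; without it one would be forced to grind out the Levi-Civita Weitzenb\"ock terms directly on $\Sym^2_0 \T M$. Once the comparison formulas are in place the instability is almost immediate, hinging only on the arithmetic $4 - 4n < 0$.
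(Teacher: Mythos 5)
Your proposal is correct and follows essentially the same route as the paper's own proof: the harmonic $2$-form $\alpha$ is $\Phi$-invariant and basic, the tensor $h_\alpha$ of (\ref{SE-TT}) is shown to be TT, and the eigenvalue computation $\Delta h_\alpha = 4 h_\alpha$ is obtained by passing to $\bar\Delta$ via the comparison Corollary and the commutation of $\bar\Delta$ with the $\bar\nabla$-parallel map $\alpha \mapsto h_\alpha$, giving $(\Delta - 2E)h_\alpha = (4-4n)h_\alpha < 0$ for $n>1$. Your closing remarks correctly identify that the substance lies in the comparison lemmas rather than the final assembly, and the coindex bound $\geq b_2$ follows exactly as you indicate.
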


%
%
\section{\bf Properties of nearly parallel $\mathrm G_2$-manifolds}  \label{facts}
%
%

In the remainder of the paper we will focus on the dimension $7$ case. In this section we will summarise
some properties of nearly parallel ${\rm G}_2$ manifolds that we shall need.

Let $(M^7, g)$ be a nearly parallel $\mathrm G_2$-manifold, i.e., a complete spin manifold with a
non-trivial real Killing spinor $\sigma$. For the moment
we do not exclude the possibility that the dimension of the space of real Killing spinors is greater than one.
We may assume that $\sigma$ has length $1$ and Killing constant $\frac{1}{2}$,
i.e., $\nabla_X \sigma = \frac{1}{2} X \cdot \sigma$.  In particular the scalar curvature is normalized as ${\rm scal}_g = 42$. Then the Killing spinor $\sigma$ determines a vector cross product by the condition
$$ P_{\sigma}(X, Y) \cdot \sigma = X \cdot Y \cdot \sigma + g(X, Y) \sigma = (X \wedge Y) \cdot \sigma, $$
and hence a $3$-form
$$ \varphi_{\sigma}(X, Y, Z) = g(P_{\sigma}(X, Y), Z).$$
For details of this construction, see e.g. \cite{FKMS97}. The stabilizers of this $3$-form belong to
the conjugacy class ${\rm G}_2 \subset \SO(7)$ and we obtain a ${\rm G}_2$-structure on $M$, which we
regard as a principal ${\rm G}_2$ bundle $Q_{\sigma}$ over $M$.
There is a unique metric connection $\bar \nabla$
on this bundle with totally skew torsion. We let $\nabla$ denote the Levi-Civita connection of $g$.

Equivalently, nearly parallel $\mathrm G_2$-manifolds can be defined as  Riemannian $7$-manifolds carrying a 3-form $\varphi$
whose stabilizer at each point is isomorphic to the group $\mathrm G_2$ and such that  $d\varphi = \lambda \ast \varphi $ for some non-zero real number $\lambda$.

Since the tensor bundles over $M$ are associated fibre bundles of $Q_{\sigma}$, we obtain orthogonal decompositions
of these bundles from the decompositions of the corresponding $\SO(7)$ representations upon restriction
to ${\rm G}_2$. The decompositions which we need are
\begin{itemize}
\item[(i)] $\Lambda^2 \T = \Lambda^2_7 \oplus \Lambda^2_{14} $
\item[(ii)] ${\rm S}^2 \T = \I \oplus {\rm S}^2_{27} $
\item[(iii)] $\Lambda^3 \T = \I \oplus \Lambda^3_{7} \oplus \Lambda^3_{27}$
\item[(iv)] the spin bundle $\D = \I \oplus \T$
\end{itemize}
where $\T$ denotes the tangent bundle of $M$, the rank of a sub-bundle is indicated by a subscript, $\I$ denotes
the ($1$-dimensional) trivial bundle, and we have identified orthogonal representations with their duals.
We further have equivalences ${\rm S}^2_{27} \cong \Lambda^3_{27},$ and $\Lambda^2_7 \cong \Lambda^3_7 \cong \T$.
Note that the trivial bundle in $\Lambda^3 \T$ is spanned by $\varphi_{\sigma}$ and that in $\D$ is spanned by
$\sigma$. We also let $\psi = \ast \varphi$, the Hodge dual of $\varphi$, which spans the trivial bundle in $\Lambda^4 \T$.

More explicitly, it is well-known (see \cite{Br87}) that
\begin{itemize}
\item[(a)] $ \Lambda^2_7 = \{ X \lrcorner \,\varphi: X \in \T \} = \{ \omega \in \Lambda^2 \T: \ast(\varphi \wedge \omega) = -2 \omega\}, $
\item[(b)] $ \Lambda^2_{14} = \{ \omega \in \Lambda^2 \T: \forall X \in \T, g(\omega, X \lrcorner \, \varphi)= 0\} =
          \{ \omega \in \Lambda^2 \T: \ast(\varphi \wedge \omega) = \omega \}, $
\item[(c)] $ \Lambda^3_{7} = \{ X \lrcorner \, \psi : X \in \T \}, $
\item[(d)] $ \Lambda^3_{27} = \{ \omega \in \Lambda^3 \T: \omega \wedge \varphi = 0 = \omega \wedge \psi \}.$
\end{itemize}

\begin{prop}  \label{harmonic-decomp}
Let $(M^7,g,\varphi)$ be a closed nearly parallel ${\rm G}_2$ manifold. Then any harmonic $2$-form is
a section of $\Lambda^2_{14} $ and any harmonic $3$-form is a section of $\Lambda^3_{27}$.
\end{prop}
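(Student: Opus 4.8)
\emph{Proof plan.} The plan is to decompose a harmonic form into its $\mathrm{G}_2$-type components and to show that the ``forbidden'' summands --- $\Lambda^2_7$ for $2$-forms and $\I\oplus\Lambda^3_7$ for $3$-forms --- must vanish. Throughout I would use the nearly parallel structure equations $d\varphi=\lambda\psi$ and $d\psi=0$ (so $\varphi$ is coclosed and $\psi$ is closed), the algebraic descriptions (a)--(d), and the standard $\mathrm{G}_2$ wedge identities $\varphi\wedge\psi=7\,\mathrm{vol}$, $\omega\wedge\psi=0$ for $\omega\in\Lambda^2_{14}$, and $\gamma\wedge\varphi=0=\gamma\wedge\psi$ for $\gamma\in\Lambda^3_{27}$. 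The ultimate source of vanishing is positivity: since $g$ is Einstein with $\scal_g=42$ we have $\Ric=6g>0$, so $b_1(M)=0$ by Bochner and the first nonzero eigenvalue of the Laplacian on functions satisfies the Lichnerowicz bound $\mu_1\ge 7$.

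For the $2$-form case, write a harmonic $\omega=\omega_7+\omega_{14}$ with $\omega_7=X\lrcorner\varphi$. First I would detect $X$ algebraically: since $\omega_{14}\wedge\psi=0$, the $6$-form $\omega\wedge\psi$ is a nonzero multiple of $*X^\flat$. Applying $d$ and using $d\omega=0=d\psi$ gives $d(\omega\wedge\psi)=0$ and hence $\delta X^\flat=0$; dually, the condition $\delta\omega=0$, i.e. $d*\omega=0$, together with $*(X\lrcorner\varphi)=\pm\,X^\flat\wedge\psi$, yields a second first-order relation. I would then organize these into the statement that $X^\flat$ is harmonic (or, if an outright decoupling from $\omega_{14}$ fails, that the pair $(X,\omega_{14})$ solves a system with strictly positive Weitzenb\"ock operator); in either case $b_1(M)=0$ forces $X=0$ and $\omega\in\Lambda^2_{14}$.

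For the $3$-form case, write a harmonic $\beta=f\varphi+X\lrcorner\psi+\beta_{27}$. The coefficient $f=\tfrac17\langle\beta,\varphi\rangle$ is detected by $\beta\wedge\psi=7f\,\mathrm{vol}$, and the $\Lambda^3_7$-part $X$ by $\beta\wedge\varphi$, which is a nonzero multiple of $*X^\flat$ because $\varphi\wedge\varphi=0$ and $\beta_{27}\wedge\varphi=0$. Differentiating $\beta\wedge\varphi$ and using $d\beta=0$ and $d\varphi=\lambda\psi$ gives $d(\beta\wedge\varphi)=-7\lambda f\,\mathrm{vol}$, which upon integration yields $\int_M f=0$ and pointwise the relation $f=c\,\delta X^\flat$ for an explicit nonzero constant $c$. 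The Hodge-dual computation applied to the harmonic $4$-form $*\beta$ (equivalently the condition $\delta\beta=0$) is meant to produce the companion relation expressing $X^\flat$ through $df$; substituting one relation into the other would exhibit $f$ as an eigenfunction of the Laplacian with an \emph{explicit} eigenvalue built from $\lambda^2$. I would then check that this eigenvalue lies strictly below $\mu_1\ge 7$, which forces $f\equiv 0$ --- the borderline value $0$ being excluded by $\int_M f=0$ --- whereupon $X^\flat=0$ by the companion relation and $b_1(M)=0$, giving $\beta\in\Lambda^3_{27}$.

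The hard part will be the derivation of the precise first-order relations governing how $d$ and $\delta$ move between $\mathrm{G}_2$-components --- the ``Cartan formulae'' of the nearly parallel structure --- together with control of the exact constants, since the whole scheme hinges on the forced eigenvalue falling below the Lichnerowicz threshold and on whether the forbidden components genuinely decouple from $\beta_{27}$ and $\omega_{14}$. To sidestep these ad hoc identities I would in parallel pursue the connection-theoretic route already set up in Section 2: because $\bar\nabla\varphi=0$, the $\mathrm{G}_2$-projections are $\bar\nabla$-parallel, so the modified Laplacian $\bar\Delta=\bar\nabla^*\bar\nabla+q(\bar R)$ commutes with them and preserves each component bundle, on which $q(\bar R)$ acts by the Casimir-type scalar of the corresponding $\G$-representation. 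Comparing $\bar\Delta$ with the Hodge Laplacian $\Delta=\nabla^*\nabla+q(R)$ exactly as in the two lemmas of Section 2 would then reduce the Proposition to a positivity/eigenvalue estimate for these operators on $\Lambda^2_7$ and on $\I\oplus\Lambda^3_7$, forcing any harmonic section there to vanish.
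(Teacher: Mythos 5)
Your route is genuinely different from the paper's proof, and it is essentially the approach of \cite{DS20} and \cite{BO19}, which the paper cites in the remark following the proposition; so the strategy can in principle be carried out. The parts you do compute are right: for a harmonic $2$-form, $\omega\wedge\psi$ is indeed a nonzero multiple of $*X^\flat$ and $d\omega=0=d\psi$ gives $\delta X^\flat=0$; for a harmonic $3$-form, $d(\beta\wedge\varphi)=-\lambda\,\beta\wedge\psi$ gives $f$ as a multiple of $\delta X^\flat$ and $\int_M f=0$. But the two completions you propose contain genuine gaps. In the $2$-form case, the statement you hope to extract --- that $X^\flat$ is harmonic --- is not just unproven but is \emph{equivalent} to the conclusion $X=0$: eliminating $d\omega_{14}$ via $d\omega_{14}=-d(X\lrcorner\varphi)$ and using $\varphi\wedge(X\lrcorner\varphi)=-2*(X\lrcorner\varphi)$ and $\psi\wedge(X\lrcorner\varphi)=3*X^\flat$, the relation coming from $\delta\omega=0$ reads $dX^\flat\wedge\psi=-\lambda *X^\flat$, i.e.\ $\pi_7(dX^\flat)$ is a fixed nonzero multiple of $X\lrcorner\varphi$. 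So $dX^\flat\neq 0$ whenever $X\neq 0$, and $b_1(M)=0$ is powerless because $X^\flat$ is never closed unless $X$ already vanishes. Nor does the pair of relations carry an internal positivity contradiction: Bochner only gives $\int_M|dX^\flat|^2=\int_M|\nabla X^\flat|^2+6\int_M|X|^2$, which is perfectly consistent with $X\neq0$. The known completions require a further idea (for instance showing that $X^\flat$ is a special Killing $1$-form and invoking the cone rigidity that forces $M$ to be the round sphere), and your fallback ``system with strictly positive Weitzenb\"ock operator'' is exactly the missing ingredient, not a routine verification.

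In the $3$-form case there is a decoupling problem you mention but underestimate: the ``companion relation expressing $X^\flat$ through $df$'' coming from $\delta\beta=0$ is contaminated by $\beta_{27}$, because $\pi_7\circ\delta:\Omega^3_{27}\to\Omega^1$ (equivalently the $\Lambda^5_7$-part of $d*\beta_{27}$) is an equivariant first-order operator whose symbol space $\Hom_{{\rm G}_2}(\Lambda^1\otimes\Lambda^3_{27},\Lambda^1)$ is nonzero; its vanishing is not automatic and would itself need proof. One can eliminate this term, since by Schur's lemma the $\beta_{27}$-contributions to the $\pi_7$-components of $d\beta=0$ and of $\delta\beta=0$ are proportional, but finding that combination, the resulting explicit eigenvalue for $f$, and its comparison with the Lichnerowicz bound $\mu_1\ge 7$ \emph{is} the proof rather than a finishing touch. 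Your connection-theoretic fallback has a parallel gap: the Section 2 comparison lemmas killed the first-order cross-term $\sum A_{e_i}\bar\nabla_{e_i}$ only because the tensors there were basic, $\Phi$-invariant and divergence-free; on a nearly parallel ${\rm G}_2$ manifold there is no analogous hypothesis, so $\bar\Delta-\Delta$ is genuinely first order on $\Omega^2_7$ and $\Omega^3_7$ and the reduction is not ``exactly as in Section 2''. For contrast, the paper's argument bypasses all component bookkeeping: by Hijazi's theorem a harmonic form Clifford-annihilates the Killing spinor $\sigma$; by Schur's lemma only the components of type $\Lambda^2_7$, $\Lambda^3_1$, $\Lambda^3_7$ (the types occurring in $\D=\I\oplus\T$) can act nontrivially on $\sigma$; and three short Clifford computations, $(X\lrcorner\varphi)\cdot\sigma=3X\cdot\sigma$, $\varphi\cdot\sigma=-7\sigma$, $(X\lrcorner\psi)\cdot\sigma=-4X\cdot\sigma$, show these actions are injective, forcing the forbidden components to vanish.
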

\begin{proof}
The Clifford product of a harmonic form with a  Killing spinor vanishes, as was shown by O. Hijazi in \cite{Hi86}.
Moreover, for a fixed spinor $\sigma$, the map $\omega \mapsto \omega \cdot \sigma: {\rm Cl}(\R^7) \rightarrow \D$
 is a $\Spin(7)$-equivariant hence ${\rm G}_2$-equivariant homomorphism. Therefore by Schur's lemma, the components
 of a form which correspond to ${\rm G}_2$-representations which do not occur in the spin representation, e.g.,
 $\Lambda^2_{14}$ and $\Lambda^3_{27}$,  also act trivially on $\sigma$. Hence in order to finish the proof of
 the proposition it suffices to show that forms in $\Lambda^2_7, \Lambda^3_1$ and
$\Lambda^3_{7}$  act non-trivially on the Killing spinor $\sigma$ of the ${\rm G}_2$ structure.

We will need the following formula for Clifford multiplication of forms:
$$
(X\wedge \omega ) \cdot \;  =\;  X \cdot  \omega \cdot  \, +  \; (X\lrcorner \,  \omega) \cdot
$$
Here $X$ is a tangent vector, $\omega$ an arbitrary $k$-form, and $\cdot$ denotes Clifford multiplication.


Using the cross product $P$ (where we have suppressed the dependence on $\sigma$) we calculate that
$\sum P(e_i, P(e_i, X)) = -6 X$, where $\{e_i\}$ is an orthonormal basis of $\T$.
Moreover, we need the following simple formulas for $\varphi$ and its Hodge dual $\psi=\ast \varphi$
\medskip
\begin{enumerate}
\item\quad
$X \lrcorner \, \varphi \;=\; - \tfrac12 \sum e_i \wedge P(e_i, X)$
\medskip
\item \quad
$\psi=\ast \varphi \:=\; - \tfrac{1}{6} \, \sum (e_i \lrcorner \, \varphi ) \wedge (e_i \lrcorner \, \varphi ) $
\quad and \quad
$X \, \lrcorner  \ast \varphi \;=\; -\tfrac13 \sum P(e_i, X) \wedge (e_i \lrcorner \, \varphi ).$
\end{enumerate}

\medskip

\noindent
First we show that forms in $\Lambda^2_7 $ act non-trivially on the Killing spinor $\sigma$. We have
$$
(X \lrcorner \, \varphi ) \cdot \sigma \;=\;   - \tfrac12 \sum e_i \cdot  P(e_i, X) \cdot \sigma \;=\;  - \tfrac12 \sum e_i \cdot  (e_i \cdot X + g(e_i, X))\cdot \sigma \;=\; 3 X \cdot \sigma \ .
$$
Recall that the map $X \mapsto X \cdot \sigma$ is injective. Next we show the non-trivial action of $\Lambda^3_1$:
$$
\varphi \cdot \sigma \;=\;  \tfrac13 \sum e_i \wedge  (e_i \lrcorner \, \varphi)  \cdot \sigma \;=\;    \tfrac13 \sum e_i \cdot (e_i \lrcorner \, \varphi) \cdot \sigma \;=\; - 7 \, \sigma \ .
$$
Finally we have to show that $\Lambda^3_7 $ acts non-trivial on $\sigma$. Here we compute
\bea
(X \, \lrcorner \,\psi) \cdot \sigma &=& - \tfrac13 \sum (P(e_i, X ) \cdot (e_i \lrcorner \, \varphi) + P(e_i, P(e_i, X))) \cdot \sigma\\
&=&
 - \tfrac13 \sum(  -3e_i \cdot P(e_i, X) - 6 X ) \cdot \sigma \;=\; - 4 X \cdot \sigma \ .
\eea
\end{proof}

\begin{rmk}
 The same result was proved in \cite{DS20}, Thm. 3.8, Thm. 3.9. and in the $2$-form case also in \cite{BO19}, Rem. 4.
 Note that also the corresponding statement for harmonic forms on  $6$-dimensional nearly K\"ahler manifolds
 (see \cite{Fos17}, \cite{V11}) can be reproved using Killing spinors and similar arguments as those above.
\end{rmk}

\begin{rmk}
The above construction and structures clearly depend smoothly on the unit Killing spinor $\sigma$ chosen and
can be made in the Sasakian-Einstein (respectively $3$-Sasakian) case using just one of the circle's
(resp. two-sphere's) worth of unit Killing spinors. By contrast, the harmonic forms associated to the metric $g$
do not depend on the structures determined by $\sigma$.
\end{rmk}

As already mentioned in the introduction, there are three classes of nearly parallel $\mathrm G_2$-manifolds: $3$-Sasakian, Sasaki Einstein  and the proper nearly parallel $\mathrm G_2$-manifolds. The homogeneous examples were classified in  \cite{FKMS97}. In the proper case we only have the squashed $7$-sphere $S^7_{sq}$, the Aloff-Wallach spaces $N_{k,l}$ and the Berger space $\SO(5)/\SO(3)_{irr}$. The only other
known class of  proper nearly parallel $\mathrm G_2$-structures are given by the second Einstein metric in the canonical variation of the $3$-Sasaki metrics in dimension $7$. In particular  $S^7_{sq}$ and $N_{1,1}$ belong to this class.  For the purpose of our article we note that it is well-known that both Einstein metrics in the canonical variation are unstable.
(See \cite{Be87}, 14.85 together with Fig. 9.72.)

%
%
\section{\bf Nearly parallel $G_2$ manifolds with $b_3 >0$}
%
%
In this section we prove
\begin{thm}  \label{dim7-b3} $($Theorem \ref{b3} $)$
Let $(M^7, g, \varphi)$ be a nearly parallel ${\rm G}_2$ manifold admitting a non-trivial harmonic $3$-form,
i.e. with $b_3  > 0$. Then the Einstein metric $g$ is linearly unstable.
\end{thm}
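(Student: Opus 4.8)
The plan is to mimic the strategy used in the Sasaki Einstein case (Theorem 2.9), constructing an explicit destabilizing symmetric $2$-tensor from a given harmonic $3$-form and showing that it is a TT-eigentensor of the Lichnerowicz Laplacian for an eigenvalue strictly below the critical value $2E$. By Proposition \ref{harmonic-decomp}, a nonzero harmonic $3$-form $\gamma$ is a section of $\Lambda^3_{27}$. Since $\Lambda^3_{27} \cong \Sym^2_0 \T$ as ${\rm G}_2$-representations, the first step is to make this isomorphism concrete: I would define a trace-free symmetric $2$-tensor $h_\gamma$ attached to $\gamma$, most naturally by contracting $\gamma$ with the defining $3$-form $\varphi$, e.g. via a formula of the shape $h_\gamma(X,Y) = \sum_{i,j} \gamma(X, e_i, e_j)\, \varphi(Y, e_i, e_j)$ (symmetrized in $X,Y$, with the trace part subtracted), using an orthonormal basis $\{e_i\}$. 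The key structural point is that this correspondence $\gamma \mapsto h_\gamma$ is a parallel bundle map with respect to the canonical ${\rm G}_2$-connection $\bar\nabla$, since $\bar\nabla\varphi = 0$.

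Next I would run the same Laplacian-comparison machinery as in Section 2. Using the curvature endomorphism formalism $\Delta_L = \nabla^*\nabla + q(R)$ and the modified operator $\bar\Delta = \bar\nabla^*\bar\nabla + q(\bar R)$, the crucial property is that $\bar\Delta$ commutes with $\bar\nabla$-parallel bundle maps (as cited from \cite{SWe19}). A harmonic form satisfies $\Delta_L \gamma = 0$ on $3$-forms, and one computes the difference $q(\bar R) - q(R)$ and $\bar\nabla^*\bar\nabla - \nabla^*\nabla$ on $\Lambda^3_{27}$ and on $\Sym^2_0\T$ separately. Because $\bar\Delta$ commutes with the parallel identification $\gamma \mapsto h_\gamma$, the value $\bar\Delta\gamma$ (computable from $\Delta_L\gamma = 0$ plus the comparison terms) transfers directly to $\bar\Delta h_\gamma$, and translating back via the $\Sym^2_0$ comparison yields an explicit eigenvalue $\Delta_L h_\gamma = \mu\, h_\gamma$. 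The goal is to show $\mu < 2E = 12$, so that $\langle(\Delta_L - 2E)h_\gamma, h_\gamma\rangle < 0$, giving instability.

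Finally I would verify that $h_\gamma$ is genuinely transverse traceless. Trace-freeness should be built into the definition of $h_\gamma$ (or follow because $\gamma \in \Lambda^3_{27}$ satisfies $\gamma \wedge \varphi = 0$). Divergence-freeness $\delta_g h_\gamma = 0$ should follow from $d\gamma = 0 = d^*\gamma$ together with the nearly parallel identity $d\varphi = \lambda \ast\varphi$, reduced to a computation involving only $\nabla\varphi$, which is algebraically controlled by the ${\rm G}_2$ structure. The main obstacle I anticipate is the bookkeeping in the curvature comparison: computing $q(\bar R) - q(R)$ on $\Lambda^3_{27}$ and on $\Sym^2_0\T$ requires knowing the torsion $3$-form and the difference between $\bar R$ and $R$ explicitly in the ${\rm G}_2$ setting, and then evaluating the resulting contractions against $\varphi$ and $\psi$ using the standard ${\rm G}_2$ contraction identities. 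Getting the constant $\mu$ exactly right — and confirming the strict inequality $\mu < 12$ rather than merely $\mu \le 12$ — is the delicate quantitative heart of the argument; everything else is structural transport along the parallel bundle map.
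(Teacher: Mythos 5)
Your proof skeleton is the same as the paper's: by Proposition~\ref{harmonic-decomp} a harmonic $3$-form $\gamma$ lies in $\Lambda^3_{27}$, one transfers it to a trace-free symmetric $2$-tensor via the ${\rm G}_2$-equivariant identification $j:\Lambda^3_{27}\to \Sym^2_0\T M$ (your contraction with $\varphi$ is this map up to a constant, and is indeed automatically symmetric and trace-free on $\Lambda^3_{27}$ by Schur's lemma, and $\bar\nabla$-parallel since $\bar\nabla\varphi=0$), and one then shows $j(\gamma)$ is a TT eigentensor of $\Delta_L$ with eigenvalue below $2E=12$. The paper does exactly this, except that the quantitative step is not re-derived: it is quoted from Proposition 6.1 of \cite{AS12}, which states that for any \emph{closed} section $\beta$ of $\Lambda^3_{27}$ one has $\Delta_L\, j(\beta) = \tfrac{\tau_0^2}{4}\, j(\beta)$; with the normalization $\scal_g = 42$, i.e. $\tau_0 = 4$, this gives the eigenvalue $4 < 12$. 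Divergence-freeness follows from the identity $\delta\, j(\beta) = -2P(d^*\beta)$ and co-closedness of $\gamma$, consistent with what you sketch.

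The genuine gap is that you never produce the eigenvalue $\mu$ nor the strict inequality $\mu < 2E$; you explicitly defer this, calling it ``the delicate quantitative heart of the argument.'' But that number \emph{is} the theorem: all the structural ingredients you assemble (parallelism of $j$, commutation of $\bar\Delta$ with parallel bundle maps, the two Laplacian comparisons) are sign-blind, and the identical transport argument would go through verbatim if the eigenvalue had come out above $12$, in which case no instability would follow. So as written your proposal proves at most that $j(\gamma)$ is a TT $\Delta_L$-eigentensor, not that $g$ is unstable. Moreover, carrying out your plan requires one point of care you only implicitly address: the rough-Laplacian comparison $\bar\nabla^*\bar\nabla - \nabla^*\nabla$ contains first-order terms of the form $\sum_i A_{e_i}\bar\nabla_{e_i}$ (exactly as in the Sasaki computation in Section 2, where killing them required divergence-freeness and basicness), and in the ${\rm G}_2$ setting these vanish only because $\gamma$ is both closed and co-closed --- harmonicity enters the comparison itself, not just the starting identity $\Delta_L\gamma = 0$. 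To close the gap, either carry out the curvature and rough-Laplacian comparisons on $\Lambda^3_{27}$ and $\Sym^2_0\T$ explicitly and verify that the outcome is $\tfrac{\tau_0^2}{4}$, or cite \cite{AS12}, Proposition 6.1, as the paper does.
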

\begin{proof}
Let $\beta$ be a harmonic  $3$-form on $M$. By Proposition \ref{harmonic-decomp}  it is a section of  $\Lambda^3_{27}$.
Consider the tracefree symmetric $2$-tensor $h := j(\beta)$ defined by the identification map
$j:  \Lambda^3_{27}  \rightarrow {\rm S}^2_0 \T ,$ which was first studied by Bryant \cite{Br05}.
Since $\beta $ is  harmonic and in particular closed, Proposition 6.1 in \cite{AS12} immediately implies $\Delta_L h = \tfrac{\tau_0^2}{4} h =   \tfrac{2\, \scal_g}{21} h = 4h$,
since we have normalized the scalar curvature to $\scal_g=42$, which corresponds to choosing $\tau_0 =4$. Hence, $h$ is a $\Delta_L$-eigentensor for the eigenvalue $4< 2E = 12$.
It remains to show that $h$ is also divergence-free and thus a TT-tensor, since it is trace-free by definition.
Here an easy calculation, which is essentially also  contained in \cite{AS12},
shows that $\delta j(\beta) = -2 P(d^* \beta)$ holds for any section $\beta$ of  $\Lambda^3_{27}$, where $P$ is the
vector cross product introduced in the previous section. Since a harmonic form $\beta$ is also co-closed we conclude
that $h= j(\beta)$ is divergence free.
\end{proof}

\medskip

Note that K. Galicki and S. Salamon showed that the odd Betti numbers of a compact $3$-Sasakian manifold
must vanish (see \cite{GaS96}, Theorem A). So the above result does not apply when the nearly parallel ${\rm G}_2$-manifold is $3$-Sasakian. As well, there is no known example of a proper nearly parallel $G_2$-manifold with $b_3>0$.
However, there are some examples of Sasaki Einstein $7$-manifolds admitting non-trivial harmonic $3$-forms (see Introduction).

%
%
\section{\bf The Berger space}  \label{Berger section}
%
%
The aim of this section is to show that the nearly parallel ${\rm G}_2$-metric on the Berger space
$M^7 = \SO(5)/\SO(3)_{irr}$ is linearly unstable. Since the Berger space has no non-trivial harmonic forms
we cannot use the arguments of the previous sections. Instead we will prove the instability  by showing that
there are symmetric Killing tensors which are eigentensors of the Lichnerowicz Laplacian with eigenvalue  less than $2E$.
We start with recalling a few facts on Killing tensors, referring to \cite{HMS17} for further details.

Recall that a symmetric tensor $h\in \Gamma(\Sym^p \T M)$ is called a {\it Killing tensor} if the complete
symmetrization of $\nabla h $ vanishes, i.e., if $(\nabla_X h)(X, \ldots, X) = 0$ holds for all tangent vectors $X$.
Let $d : \Gamma(\Sym^p \T M) \rightarrow \Gamma(\Sym^{p+1} \T M)$
be the differential  operator defined by $d  h = \sum_i e_i \cdot \nabla_{e_i} h$, where $\{e_i\}$ is a local orthonormal
basis and $\cdot$ now denotes the symmetric product. Then the Killing condition for the symmetric tensor $h$ is equivalent to $d h = 0$.
For a trace-free symmetric Killing tensor $h\in \Gamma(\Sym^p_0 \T M)$ it is easy to check that $h$ is also divergence-free, i.e.,
trace-free Killing tensors are TT-tensors (see Corollary 3.10 in  \cite{HMS17}).

On compact Riemannian manifolds Killing tensors can be characterized using the Lichnerowicz Laplacian $\Delta_L$ acting on
symmetric tensors. Recall that $\Delta_L$ can be written as $\Delta_L = \nabla^*\nabla + q(R)$. Then an easy calculation shows
$\Delta_L  h \ge 2 q(R) h $ on divergence-free symmetric tensors, with equality exactly for divergence-free Killing tensors $h$, i.e., $h\in \Gamma(\Sym^2_0 \T M)$
is a Killing tensor if and only if $\Delta_L h = 2 q(R) h$  (see Proposition 6.2 in \cite{HMS17}).


The isotropy representation of the Berger space  $ M^7 = \SO(5)/\SO(3)_{irr}$ is the unique  $7$-dimensional
irreducible representation of $\SO(3)$. It also defines an embedding of $\SO(3)$ into ${\rm G}_2$ and thus a ${\rm G}_2$-structure on
$M^7$  which turns out to be a  proper nearly parallel  ${\rm G}_2$-structure (see \cite{Br87}, p. 567).

Replacing the groups $\SO(5)$ and $\SO(3)$ by their double covers, we can realize the Berger space also as
$M^7= \Sp(2)/\Sp(1)_{irr}$ where $\Sp(1)$ is embedded by the unique $4$-dimensional irreducible representation.
We write $\sp(2) = \sp(1) \oplus \m$, where $\m$ is the orthogonal complement of $\sp(1)$ with respect to the Killing
form $B$ of $\sp(2)$. As usual, $\m$ is identified with the tangent space at the identity coset and, as mentioned above,
is the irreducible   $7$-dimensional  representation of $\Sp(1)$. Recall that the irreducible complex  representations of
$\Sp(1)$ can be written as the symmetric powers $\Sym^k E$, where $E = \C^2$ is the
standard representation of  $\Sp(1)$.  In particular, we have $\m^{\scriptsize\C} := \m \otimes \C = \Sym^6 E$. We will need the
following decomposition into irreducible summands:
\begin{equation}\label{deco1}
\Sym^2_0 \, \m^{\scriptsize\C} \cong \Lambda^3_{27} \m^{\scriptsize\C} \cong \Sym^4 E \oplus \Sym^8 E  \oplus \Sym^{12} E \ .
\end{equation}

The Peter-Weyl theorem and the Frobenius reciprocity now imply the following
decomposition  into irreducible summands of the left-regular representation of $\Sp(2)$   on sections of the vector bundle $\Sym^2_0  \T M^{\scriptsize\C}$
\begin{equation}\label{deco2}
\Gamma(\Sym^2_0 \,  \T M^{\scriptsize\C}) \cong \overline \bigoplus_{k,l} \, V(k,l) \otimes \Hom_{\Sp(1)} (V(k,l), \, \Sym^2_0\, \m^{\scriptsize\C})
\end{equation}
where the sum goes over all pairs of integer $(k, l)$ with $k \ge l \ge 0$. Here  $V(k,l)$ is the irreducible $\Sp(2)$-representation
with highest weight $\gamma = (k,l)$, where $k$ corresponds to the short simple root,  and it is easy to compute that the $\sp(2)$-Casimir operator (with respect to the Killing form) acts on the representation space $V(k,l)$ as $-\tfrac{1}{12}(4k + k^2 + 2l + l^2)\id$. In particular, $V(2,0)$ is the adjoint representation of
$\sp(2)$ and the Casimir eigenvalue is $-1$, as it should be.

Interesting for us will be the representation $V(1,1)$ with Casimir eigenvalue $-\tfrac23$. It is easy to check that $V(1,1)$ is $5$-dimensional
and that $V(1,1) = \Sym^4 E$ considered as an $\Sp(1)$-representation. Moreover, from $\m^{\scriptsize\C} = \Sym^6 E$ and the decomposition \eqref{deco1}
we conclude
\begin{equation}\label{hom}
\dim \Hom_{\Sp(1)}(V(1,1), \, \Sym^2_0 \, \m^{\scriptsize\C}) = 1
\quad \mbox{and} \quad
\Hom_{\Sp(1)}(V(1,1), \,\m^{\scriptsize\C} ) = \{0\} \ .
\end{equation}
Using the program LiE for the decomposition of $\Sym^3 \Sym^6E$ as an $\Sp(1)$-representation it is also easy to check that $\Hom_{\Sp(1)}(V(1,1), \,\Sym^3 \m^{\scriptsize\C}) = \{0\}$.
This has the following important consequence:

\begin{lemma}
The space $V(1,1) \subset \Gamma(\Sym^2_0 \,  \T M^{\scriptsize \C} )$ consists of tracefree Killing tensors.
\end{lemma}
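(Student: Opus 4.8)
The plan is to verify the Killing property in its differential form. By the discussion preceding the lemma, a symmetric tensor $h$ is a Killing tensor exactly when $dh = 0$, where $d : \Gamma(\Sym^2 \T M^{\scriptsize\C}) \to \Gamma(\Sym^3 \T M^{\scriptsize\C})$ denotes the operator $dh = \sum_i e_i \cdot \nabla_{e_i} h$ formed from the Levi-Civita connection and the symmetric product. First I would observe that $d$ is $\Sp(2)$-equivariant: the Berger metric is $\Sp(2)$-homogeneous, so $\nabla$ is $\Sp(2)$-invariant and the symmetric product is natural, whence $d$ commutes with the left-regular $\Sp(2)$-action on sections. Consequently $d$ sends the $V(1,1)$-isotypic summand of $\Gamma(\Sym^2_0 \T M^{\scriptsize\C})$, which by \eqref{hom} is a single irreducible copy of $V(1,1)$, namely the $5$-dimensional space in the statement, into the $V(1,1)$-isotypic summand of the target $\Gamma(\Sym^3 \T M^{\scriptsize\C})$.

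The key step is to show that this target summand is zero. Applying the Peter--Weyl/Frobenius reciprocity decomposition, as in \eqref{deco2} but now to the bundle $\Sym^3 \T M^{\scriptsize\C}$ with fibre $\Sym^3 \m^{\scriptsize\C}$, the multiplicity of $V(1,1)$ in $\Gamma(\Sym^3 \T M^{\scriptsize\C})$ equals $\dim \Hom_{\Sp(1)}(V(1,1), \Sym^3 \m^{\scriptsize\C})$. This number was already shown above to vanish, via the LiE computation of $\Sym^3 \Sym^6 E$. Hence the entire $V(1,1)$-component of $\Gamma(\Sym^3 \T M^{\scriptsize\C})$ is trivial, and equivariance forces $d$ to annihilate the $V(1,1)$-component of the source. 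Therefore every $h \in V(1,1) \subset \Gamma(\Sym^2_0 \T M^{\scriptsize\C})$ satisfies $dh = 0$ and is a Killing tensor; since it lies in $\Sym^2_0$ it is trace-free, which completes the argument.

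The equivariance of $d$ is routine, so the genuine content lies in the vanishing of the target $\Hom$-space, which is exactly the representation-theoretic input established before the lemma. The one point I would be careful about is that $d$ need not preserve trace-freeness, so the correct target is the full bundle $\Sym^3 \T M^{\scriptsize\C}$ rather than $\Sym^3_0 \T M^{\scriptsize\C}$; under the splitting $\Sym^3 \m^{\scriptsize\C} \cong \Sym^3_0 \m^{\scriptsize\C} \oplus \m^{\scriptsize\C}$ the needed vanishing also absorbs the trace part, consistent with the second identity $\Hom_{\Sp(1)}(V(1,1), \m^{\scriptsize\C}) = \{0\}$ of \eqref{hom}. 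A heavier alternative would be to run everything through the characterization $\Delta_L h = 2 q(R) h$ of divergence-free Killing tensors and to compute $q(R)$ on $V(1,1)$, but the equivariance argument is cleaner and isolates the single vanishing fact that actually does the work.
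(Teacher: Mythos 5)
Your proof is correct and follows essentially the same route as the paper: both rest on the $\Sp(2)$-equivariance of the operator $d$, the Frobenius reciprocity decomposition \eqref{deco2}, and the vanishing of $\Hom_{\Sp(1)}(V(1,1), \Sym^3 \m^{\C})$ to conclude that $d$ kills the $V(1,1)$-summand, with trace-freeness holding by definition. Your remark that the target must be the full bundle $\Sym^3 \T M^{\C}$ rather than its trace-free part is a careful touch, and it is consistent with the paper, which computes the $\Hom$-space for the full $\Sym^3 \m^{\C}$.
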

\begin{proof}
Killing $2$-tensors are by definition symmetric tensors in the kernel of the differential operator $d :\Gamma(\Sym^2 \, \T M) \rightarrow \Gamma(\Sym^3 \, \T M)$
introduced above.  In our situation the operator $d$ is an $\Sp(2)$-invariant differential operator.
 Hence, with respect to the decomposition \eqref{deco2} it restricts to an invariant map
$$
V(1,1)\otimes  \Hom_{\Sp(1)}(V(1,1), \, \Sym^2_0 \, \m^{\scriptsize\C}) \longrightarrow V(1,1) \otimes \Hom_{\Sp(1)}(V(1,1), \,\Sym^3 \m^{\scriptsize\C}) \ ,
$$
which has to be zero since the space on right side vanishes. A similar argument shows that also the divergence of elements in $V(1,1)$ has to be zero.
However, since elements in $V(1,1)$ are trace-free by definition this was already clear from   a remark above.
\end{proof}

Since we  know that $\Delta_L$ acts by the curvature term $2 q(R)$ on divergence-free Killing tensors it
remains to compute the action of $q(R)$ on the space $V(1,1)$. For doing this we will use the comparison formula
(5.33) in \cite{AS12}. Let $(M^7, g, \varphi)$ be a nearly parallel $\rm G_2$ manifold, with
$d \varphi = \tau_0 \ast \varphi$ and let $\bar R$ denote the curvature of the canonical ${\rm G}_2$ connection
 $\bar \nabla$ then this formula states
\begin{equation}\label{qr}
q(R)  \;=\; q(\bar R) \; +\; 3 (\tfrac{\tau_0}{12})^2 \, \Cas^{\so(7)} \,-\;4 (\tfrac{\tau_0}{12})^2 \, S
\end{equation}
where $ \Cas^{\so(7)} $ is the  Casimir operator of $\so(7)$ which acts as $-k(7-k)\id$ on forms in $\Lambda^k \T$   and as $-14\,\id$ on $\Sym^2_0 \T$.
The ${\rm G}_2$ invariant endomorphism  $S$ is defined as $S = \sum P_{e_i} \circ P_{e_i}$, where $P_X$ is the skew-symmetric endomorphism
$X \lrcorner \, \varphi$. Then a straight forward computation on explicit elements shows that $S = - 6\, \id $ on $\T$ and $S = -14 \,\id$ on $\Sym^2_0 \T$.

\medskip

As an example we consider the difference of $q(R)$ and $q(\bar R)$ on $\T M$. Of course we already know that
$q(R) = \Ric = \tfrac{\scal}{7} = \tfrac{3\tau^2_0}{8}$. Hence formula \eqref{qr} and the explicit values for the Casimir operator and the endomorphism
$S$ yield  $ q(\bar R) = q(R) - \tfrac{\tau^2_0}{24} = \tfrac{\tau^2_0}{3}$.

\medskip

Returning to the Berger space,  recall that the induced Einstein metric $g$ defined by the ${\rm G}_2$ structure is
given by a multiple of the Killing form $B$ of $\sp(2)$ restricted to $\m$. Moreover, the canonical homogeneous connection
coincides with the canonical ${\rm G}_2$ connection $\bar \nabla$.  For our calculation we will use the normalization $g= -B$ for
which we have  $\tau^2_0 = \tfrac{6}{5}$ and $\scal = \tfrac{63}{20}$ (see \cite{AS12}, Lemma 7.1). In this situation
we know that the endomorphism $q(\bar R)$ acts as  $ -\Cas^{\sp(1)}$ (see \cite{AS12}, Lemma 7.2).
The Casimir operator of $\sp(1)$  with respect to the Killing form  acts on  $\Sym^k E$ as  $-k(k+2)\,\id$.
Thus $q(\bar R)$ acts on the bundle defined by the $\Sp(1)$ representation $\Sym^k E$ as $c k(k+ { 2})\,\id$ for some
 positive constant $c$, which can be determined from the case $k=6$. Indeed here we have
$\Sym^6E = \m^{\scriptsize \C}$ and we already calculated  that $q(\bar R) = \tfrac{\tau^2_0}{3} = \tfrac25$ on $\m^{ \scriptsize \C}$. It follows $c= \tfrac{1}{120}$. In particular we conclude that $q(\bar R) = \tfrac15 \,\id$ on $\Sym^4 E$ and  \eqref{qr} in our normalization
implies that $q(R) = \tfrac{19}{60}\,\id$ on the space $V(1,1)$. Hence, for any Killing tensor
 $h \in V(1,1) \subset \Gamma(\Sym^2_0 \, \m^{\scriptsize \C})$ we have $\Delta_L h = 2q(R)\, h = \tfrac{19}{30}\, h$.
But $\tfrac{19}{30} <  2E = \tfrac{3\tau^2_0}{4} = \tfrac{9}{10} $, so the instability condition (\ref{instability}) holds.

\medskip

We have therefore proved the following

\begin{thm}  $($Theorem \ref{Berger}$)$
Let $M^7 = \SO(5)/\SO(3)_{irr}$ be the Berger space equipped with its homogenous proper nearly parallel ${\rm G}_2$ structure. Then the induced Einstein metric is linearly unstable. Moreover, $M^7$ admits a $5$-dimensional space of divergence and
tracefree Killing tensors. \eop
\end{thm}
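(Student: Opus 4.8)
The plan is to produce linear instability by exhibiting transverse-traceless symmetric $2$-tensors on which the Lichnerowicz Laplacian $\Delta_L$ has eigenvalue strictly below the critical value $2E$, so that the instability condition (\ref{instability}) holds. Because the Berger space is a rational homology sphere, the harmonic-form constructions behind Theorems \ref{SE} and \ref{b3} are unavailable; instead I would exploit the homogeneous structure together with the fact that a tracefree Killing tensor automatically satisfies $\Delta_L h = 2q(R)\,h$ (Proposition 6.2 in \cite{HMS17}). The first task is therefore to locate a suitable space of tracefree Killing tensors inside $\Gamma(\Sym^2_0 \T M^{\C})$ by representation theory.

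First I would pass to the realization $M^7 = \Sp(2)/\Sp(1)_{irr}$ and decompose the bundle $\Sym^2_0 \T M^{\C}$ via Frobenius reciprocity as in (\ref{deco2}). Using $\m^{\C} = \Sym^6 E$ and the branching (\ref{deco1}), I would single out the summand attached to the $\Sp(2)$-representation $V(1,1)$: it is $5$-dimensional, occurs in $\Sym^2_0 \m^{\C}$ with multiplicity one, and equals $\Sym^4 E$ as an $\Sp(1)$-representation. The key point making these sections Killing tensors is the vanishing $\Hom_{\Sp(1)}(V(1,1), \Sym^3 \m^{\C}) = \{0\}$: since the operator $d : \Gamma(\Sym^2 \T M) \to \Gamma(\Sym^3 \T M)$ is $\Sp(2)$-equivariant, on the $V(1,1)$-isotypic summand it is governed by a map of $\Hom_{\Sp(1)}$-spaces, which must then be zero. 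Tracefreeness is built in, and divergence-freeness follows from Corollary 3.10 in \cite{HMS17}, so $V(1,1)$ consists of transverse-traceless Killing tensors; this already yields the claimed $5$-dimensional space.

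It remains to evaluate $\Delta_L = 2q(R)$ on $V(1,1)$ and verify that it falls below $2E$. Here I would apply the comparison formula (\ref{qr}) to reduce $q(R)$ to $q(\bar R)$ together with the explicit values $\Cas^{\so(7)} = -14\,\id$ and $S = -14\,\id$ on $\Sym^2_0 \T$. On the homogeneous space $q(\bar R)$ acts as $-\Cas^{\sp(1)}$, which on $\Sym^4 E$ gives $\tfrac15\,\id$ once the proportionality constant is fixed from the case $k=6$, where $\Sym^6 E = \m^{\C}$ and $q(\bar R) = \tfrac{\tau_0^2}{3}$. Combining these in the normalization $g = -B$ with $\tau_0^2 = \tfrac65$, I obtain $q(R) = \tfrac{19}{60}\,\id$ and hence $\Delta_L = \tfrac{19}{30}\,\id$ on $V(1,1)$, which is strictly less than $2E = \tfrac{3\tau_0^2}{4} = \tfrac{9}{10}$. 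Each nonzero $h \in V(1,1)$ then violates stability through (\ref{instability}).

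The step I expect to be the main obstacle is the representation-theoretic bookkeeping: correctly branching $\Sym^2_0 \m^{\C}$ and, above all, $\Sym^3 \m^{\C} = \Sym^3 \Sym^6 E$ under $\Sp(1)_{irr}$ to confirm the multiplicity facts in (\ref{hom}) and the crucial vanishing $\Hom_{\Sp(1)}(V(1,1), \Sym^3 \m^{\C}) = \{0\}$ (for which a computation via LiE is natural), together with careful tracking of the normalization constants so that the final eigenvalue $\tfrac{19}{30}$ lands strictly below $\tfrac{9}{10}$.
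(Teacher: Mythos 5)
Your proposal matches the paper's own proof in every essential respect: the same realization $M^7 = \Sp(2)/\Sp(1)_{irr}$, the same Frobenius reciprocity decomposition isolating the $5$-dimensional summand $V(1,1)$, the same vanishing of $\Hom_{\Sp(1)}(V(1,1), \Sym^3 \m^{\scriptsize\C})$ to establish the Killing property, and the same computation via the comparison formula yielding $\Delta_L = \tfrac{19}{30}\,\id < 2E = \tfrac{9}{10}$ on $V(1,1)$. You have correctly identified the representation-theoretic bookkeeping (checked via LiE in the paper) as the delicate step, and all your normalization constants agree with the paper's.
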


\begin{rmk}  \label{eigenfunction}
The reader might wonder if $\nu$-linear instability for the Berger space can be shown by looking at the spectrum
of the Laplacian on functions. It turns out that this is not possible because computations similar to the above
show that the smallest nonzero eigenvalue is larger than $2E$, and the corresponding eigenspace is the $35$-dimensional
irreducible module $V(4, 0)$.
\end{rmk}



\begin{thebibliography}{bbbbbbb}

\bibitem[AS12]{AS12}
Alexandrov, B., Semmelmann, U:
Deformations of nearly parallel $\rm G_2$-structures. Asian J. Math.,  {\bf 16} (2012), no. 4, 713-744.


\bibitem[BO19]{BO19}
Ball, G., Oliveira, G.: Gauge theory on Aloff-Wallach spaces.  Geom. Topol., {\bf 23} (2019), no. 2, 685-743.


\bibitem[Ba93]{Ba93} B\"ar, C.: Real Killing spinors and holonomy.  Comm. Math. Phys.,
         \textbf{154}, (1993),  509-521.


\bibitem[BFGK91]{BFGK91} Baum, H., Friedrich, T., Grunewald, R., Kath. I.: Twistors and Killing spinors
         on Riemannian manifolds. Teubner-Texte zur Mathematik,  Band \textbf{124}, (1991).

\bibitem[Be87]{Be87} Besse, A.L.: Einstein manifolds. Berlin: Springer 1987.

\bibitem[BG08]{BG08} Boyer, C., Galicki, K.: Sasakian geometry. Oxford University Press (2008).


\bibitem[Bo08]{Bo08} Boyer, C.: Sasakian geometry: the recent work of Krzysztof Galicki. 
Note Mat. {\bf 28} (2009), [2008 on verso], suppl. 1, 63-105.


\bibitem[Br87]{Br87} Bryant, R.: Metrics with exceptional holonomy. Ann. of Math., (2) {\bf 126}, (1987), no. 3, 525-576.

\bibitem[Br05]{Br05} Bryant, R.: Some remarks on ${\rm G}_2$ structures. Proceedings of the G\"okova geometry-topology
        conference 2005, (2006), 75-109.

\bibitem[CHI04]{CHI04} Cao, H.-D., Hamilton, R., Ilmanen, T.: Gaussian densities and stability for some Ricci solitons.
     arXiv:0404165, (2004).

\bibitem[CR84]{CR84} Castellani, L., Romans, L. J.: $N=1$ and $N=1$ supersymmetry in a new class of solutions for $d=11$
        supergravity. Nucl. Phys., \textbf{B238}, (1984), 683-701.

\bibitem[DWW05]{DWW05} Dai, X., Wang, X., Wei, G.: On the stability of Riemannian manifold with parallel spinors,
            Invent. Math., \textbf{161}(1), (2005), 151-176.

\bibitem[DS20]{DS20}  Dwivedi S.,  Singhal R.:  Deformation theory of nearly  ${\rm G}_2$ manifolds, arXiv:2007.02497.


\bibitem[EKAH86]{EH86} El Kacimi-Alaoui, A., Hector, G.: D\'ecomposition de Hodge basique pour un feuilletage
     riemannien. Ann. Inst. Fourier (Grenoble), {\bf 36}, (1986), 207-227.


\bibitem[Fos17]{Fos17} Foscolo, R.:
Deformation theory of nearly K\"ahler manifolds. J. Lond. Math. Soc., {\bf 95}, (2017), 586-612.

\bibitem[FKMS97]{FKMS97} Friedrich, T., Kath, I., Moroianu, A., Semmelmann, U.: On nearly parallel ${\rm G}_2$-structures.
      J. Geom. Phys., {\bf 23}, (1997), 256-286.

\bibitem[GaS96]{GaS96}
Galicki, K.,  Salamon, S.: Betti numbers of $3$-Sasakian manifolds.
Geom. Dedicata,  {\bf 63}, (1996), no. 1, 45-68.

\bibitem[GNT16]{GNT16} Goertsches, O., Nozawa H., T\"oben, D.: Rigidity and vanishing of basic Dolbeault cohomology of
     Sasakian manifolds. J. Symp. Geom., {\bf 14}, (2016), 31-70.

\bibitem[Go19]{Go19} Gomez, R. G.: Sasaki-Einstein $7$-manifolds, Orlik polynomials and homology. Symmetry, {\bf 11}, (2019),
      947, 5 pp.

\bibitem[HMS16]{HMS17}
Heil, K., Moroianu, A., Semmelmann, U.:  Killing and conformal Killing tensors. J. Geom. Phys., {\bf 106}, (2016), 383-400.


\bibitem[Hi86]{Hi86}
Hijazi, O.: A conformal lower bound for the smallest eigenvalue of  the Dirac operator and Killing spinors. Comm. Math.
      Phys., {\bf 104}, (1986), 151-162.

\bibitem[Kr15]{Kr15} Kr\"{o}ncke, K.: Stability and instability of Ricci solitons. Calc. Var. PDE.,
          \textbf{53}, (2015), 265-287.


\bibitem[Kr17]{Kr17} Kr\"{o}ncke, K.: Stable and unstable Einstein warped products. Trans. Amer. Math. Soc.,
           \textbf{365}(9), (2017), 6537-6563.


\bibitem[PP84]{PP84} Page, Don N., Pope, C. N.: New squashed solutions of $d=11$ supergravity.
        Phys. Lett., \textbf{147B}, (1984), 55-60.

\bibitem[SWW20]{SWW20} Semmelmann, U., Wang, C., Wang, M.: On the linear stability of nearly K\"ahler
       $6$-manifolds. Ann. Glob. Anal. Geom., {\bf 57}, (2020), 15-22.

\bibitem[SWe19]{SWe19}
Semmelmann,  U., Weingart, G.:
The standard Laplace operator. Manuscripta Math., {\bf 158}, (2019), no. 1-2, 273-293.

\bibitem[VC17]{VC17} Van Coevering, C.: Deformations of Killing spinors on Sasakian and
     $3$-Sasakian manifolds. J. Math. Soc. Japan, {\bf 69}, No. 1, (2017), 53-91.


\bibitem[V11]{V11} Verbitsky, M.:
Hodge theory on nearly K\"ahler manifolds. Geom. Top., {\bf 15},
       (2011), 2111-2133.


\bibitem[Wan17]{Wan17} Wang, C.: Stability of Riemannian manifolds with Killing spinors,
       Internat. J. Math., \textbf{28}(1), 1750005, (2017), 19 pages.

\bibitem[WW18]{WW18} Wang, C.,  Wang, M.: Instability of some Riemannian manifolds
     with real Killing spinors. Comm. Anal. Geom., (to appear), arXiv:1808.05747.


\end{thebibliography}
\end{document}